\theoremstyle{plain}
\newtheorem*{Mainlem}{Main Lemma}
\newtheorem*{Lemma}{Lemma}
\newtheorem{The}{Theorem}
\newtheorem{Oldie}{Theorem}
\theoremstyle{remark}
\theoremstyle{definition}
\newcommand{\GL}{\operatorname{GL}}
\newcommand{\SL}{\operatorname{SL}}
\newcommand{\PGL}{\operatorname{PGL}}
\newcommand{\PE}{\operatorname{PE}}
\newcommand{\Sp}{\operatorname{Sp}}
\newcommand{\map}{\longrightarrow}
\newif\ifcomm
\let\ifcomm\iffalse
\def\a{\alpha}
\def\b{\beta}
\def\g{\gamma}
\def\A{\operatorname{A}}
\def\B{\operatorname{B}}
\def\C{\operatorname{C}}
\def\D{\operatorname{D}}
\def\F{\operatorname{F}}
\def\G{\operatorname{G}}
\def\K{\operatorname{K}}
\def\GF#1{{\mathbb F}_{\!#1}}
\def\rk{\operatorname{rk}}
\def\sr{\operatorname{sr}}
\title{Generation of relative commutator subgroups\\ in Chevalley groups. II}
\author{Nikolai Vavilov}
\address{Department of Mathematics and Mechanics,\\
St.~Petersburg State University,\\ St.~Petersburg, Russia}
\email{nikolai-vavilov@yandex.ru}
\thanks{The work of the first author is supported by the
Russian Foundation of Fundamental Research, grant 18-31-20044.}
\author{Zuhong Zhang}
\address{Department of  Mathematics\\
 Beijing Institute of Technology\\
 Beijing, China}
\email{zuhong@hotmail.com}
\date{}
\keywords{Chevalley groups, elementary subgroups, generation
of mixed commutator subgroups, standard commutator formula}
\begin{document}

\begin{abstract}
In the present paper, which is a direct sequel of our paper 
\cite{RNZ-generation} joint with Roozbeh Hazrat, we prove unrelativised version of the 
standard commutator formula in the setting of Chevalley groups. 
Namely, let $\Phi$ be a reduced irreducible root system of rank 
$\ge 2$, let $R$ be a commutative ring and let $I,J$ be two ideals 
of $R$. We consider subgroups of the Chevalley group $G(\Phi,R)$
of type $\Phi$ over $R$. The unrelativised elementary subgroup 
$E(\Phi,I)$ of level $I$ is generated (as a group) by the elementary 
unipotents $x_{\a}(\xi)$, $\a\in\Phi$,  $\xi\in I$, of level $I$. 
Obviously, in general $E(\Phi,I)$ has no chances to be normal
in $E(\Phi,R)$, its normal closure in the absolute elementary subgroup $E(\Phi,R)$ is denoted by $E(\Phi,R,I)$. The main results of 
\cite{RNZ-generation} implied that the commutator 
$\big[E(\Phi,I),E(\Phi,J)]$ is in fact normal in $E(\Phi,R)$. In
the present paper we prove an unexpected result that in fact
$\big[E(\Phi,I),E(\Phi,J)]=\big[E(\Phi,R,I),E(\Phi,R,J)\big]$.
It follows that the standard commutator formula also holds
in the unrelativised form, namely 
$\big[E(\Phi,I),C(\Phi,R,J)]=\big[E(\Phi,I),E(\Phi,J)\big]$,
where $C(\Phi,R,I)$ is the full congruence subgroup  of level $I$.
In particular, $E(\Phi,I)$ is normal in 
$C(\Phi,R,I)$. 
\par\smallskip\noindent

\end{abstract}

\maketitle

\section*{Introduction}

Let $\Phi$ be a reduced irreducible root system of rank $\ge 2$,
let $R$ be a commutative ring with 1, and let $G(\Phi,R)$ be a
Chevalley group of type $\Phi$ over $R$. For the background on
Chevalley groups over rings see \cite{NV91} or \cite{vavplot},
where one can find many further references. We fix a split maximal
torus $T(\Phi,R)$ in $G(\Phi,R)$ and consider root unipotents
$x_{\alpha}(\xi)$
elementary with respect to $T(\Phi,R)$. The subgroup $E(\Phi,R)$
generated by all $x_{\alpha}(\xi)$, where $\alpha\in\Phi$,
$\xi\in R$, is called the {\it absolute\/} elementary subgroup of
$G(\Phi,R)$.
\par
Now, let $I\unlhd R$ be an ideal of $R$. Then the 
{\it unrelativised elementary subgroup\/} $E(\Phi,I)$ of level $I$ is 
defined as the subgroup of $E(\Phi,R)$, generated by all elementary 
root unipotents $x_{\alpha}(\xi)$ of level $I$,
$$ E(\Phi,I)=
{\big\langle x_{\alpha}(\xi)\mid \alpha\in\Phi,\
\xi\in R\big\rangle}. $$
\noindent
In general, this subgroup has no chances to be normal in $E(\Phi,R)$.
Its normal closure $E(\Phi,R,I)=E(\Phi,I)^{E(\Phi,R)}$ is called the
{\it relative elementary subgroup\/} of level $I$.
\par

The starting point of the present paper are the following three
observations contained in \cite{RNZ-generation}. The first one
is the left-most (non-trivial!) inclusion in Theorem 3.1, whereas the
other two are Corollary 5.2 and Corollary 5.1 of Theorem 1.3, 
respectively. In these results some additional assumptions are 
necessary in the cases $\Phi=\C_l,\G_2$. The first of these
results relies on a calculation, that is immediate for simply laced
systems, but rather non-trivial in the exceptional cases 
$\Phi=\C_2,\G_2$. The other two are easy corollaries of 
this result and the main result of \cite{RNZ-generation}, 
describing generators of the mixed commutator subgroup $[E(\Phi,R,I),E(\Phi,R,J)]$.

In the rest of this paper we impose the following umbrella assumption:

(*) In the cases $\Phi=\C_2,\G_2$ assume that $R$ does not have 
residue fields $\GF{2}$ of two elements, and in the case 
$\Phi=\C_l$, $l\ge 2$, assume additionally that any $\theta\in R$ 
is contained in the ideal $\theta^2R+2\theta R$.

This condition arises in the computation of the lower level of
$[E(\Phi,I),E(\Phi,J)]$, in \cite{RNZ2}, Lemma 17, and 
\cite{RNZ-generation}, Theorem 3.1, see also further related
results, and discussion of this condition in 
\cite{Stepanov_calculus, Stepanov_nonabelian}.

\begin{Oldie}\label{Oldie:1}
Let $\Phi$ be a reduced irreducible root system of rank $\ge 2$
and let $I,J$ be two ideals of a commutative ring $R$. Then one
has the following inclusion
$$ E(\Phi,R,IJ)\le [E(\Phi,I),E(\Phi,J)]. $$ 
\end{Oldie}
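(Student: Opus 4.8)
The plan is to exhibit a set of group generators of $E(\Phi,R,IJ)$ one by one inside the commutator subgroup $H:=[E(\Phi,I),E(\Phi,J)]$; at this stage we may \emph{not} yet assume $H$ normal in $E(\Phi,R)$, so conjugates must be carried along explicitly. By the familiar description of the relative elementary subgroup (see \cite{NV91} and the references there), for $\rk\Phi\ge 2$ and any ideal $K\unlhd R$ the group $E(\Phi,R,K)$ is generated by the elements $z_{\alpha}(\zeta,\eta)=x_{-\alpha}(\eta)x_{\alpha}(\zeta)x_{-\alpha}(-\eta)$ with $\alpha\in\Phi$, $\zeta\in K$, $\eta\in R$. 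Since $z_{\alpha}(\zeta_{1}+\zeta_{2},\eta)=z_{\alpha}(\zeta_{1},\eta)z_{\alpha}(\zeta_{2},\eta)$ and $IJ$ consists of the finite sums of products $\xi\eta'$, $\xi\in I$, $\eta'\in J$, it suffices to place each $z_{\alpha}(\xi\eta',\eta)$, $\xi\in I$, $\eta'\in J$, $\eta\in R$, into $H$.

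For this it is in turn enough to write $x_{\alpha}(\xi\eta')$ as a product of commutators $[a,b]$ in which $a$ is a product of root unipotents $x_{\delta}(\,\cdot\,)$, $\delta\ne\pm\alpha$, with coefficients in $I$, and $b$ a similar product with coefficients in $J$: conjugating such a factorisation by $x_{-\alpha}(\eta)$ produces $z_{\alpha}(\xi\eta',\eta)$, and conjugation by $x_{-\alpha}(\eta)$ carries each $x_{\delta}(\sigma)$ to $x_{\delta}(\sigma)$ times factors $x_{i\delta-j\alpha}(\,\cdot\,)$ with $i\ge 1$, all of level $I$, resp.\ $J$, as soon as $\sigma$ is; so $a$ returns into $E(\Phi,I)$ and $b$ into $E(\Phi,J)$. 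For simply laced $\Phi$ this is immediate: using $\rk\Phi\ge 2$ choose roots $\beta,\gamma$ with $\alpha=\beta+\gamma$; the Chevalley commutator formula then collapses to $[x_{\beta}(\xi),x_{\gamma}(\eta')]=x_{\alpha}(\pm\xi\eta')$, and replacing $\xi$ by $-\xi$ delivers $x_{\alpha}(\xi\eta')$.

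The real work lies in the multiply laced cases, and it is precisely there that the umbrella hypothesis (*) enters. Now $[x_{\beta}(\xi),x_{\gamma}(\eta')]$ carries, besides the wanted factor $x_{\alpha}(N_{\beta\gamma11}\xi\eta')$, further factors $x_{i\beta+j\gamma}(\,\cdot\,)$ with $i+j\ge 3$, and the coefficient $N_{\beta\gamma11}$ may equal $\pm 2$ or, in $\G_{2}$, $\pm 3$. When $\alpha$ admits a decomposition $\alpha=\beta+\gamma$ with $N_{\beta\gamma11}=\pm1$ and no obstructing extra factors --- by inspection this is so for every root of $\F_{4}$ and of $\B_{l}$ with $l\ge 3$, and for every short root of $\C_{l}$ with $l\ge 3$ --- the simply laced argument carries over verbatim, which is why no extra hypotheses are imposed there. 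What remains are the long roots $2e_{i}$ of $\C_{l}$, reachable only with coefficient $\pm 2$, together with all roots of the cramped rank two systems $\C_{2}=\B_{2}$ and $\G_{2}$. For these I would proceed by descent on root height: the divisibility clause of (*) writes every $\theta\in R$ as $\theta^{2}r_{1}+2\theta r_{2}$, so $x_{\alpha}(\theta)$ with $\theta\in IJ$ splits into $x_{\alpha}(\theta^{2}r_{1})x_{\alpha}(2\theta r_{2})$, both within reach of the commutator formula, while the spurious higher root factors produced on the way have level contained in $IJ$ and are attached to roots already handled, hence lie in $H$ by the cases already settled; in $\C_{2}$ and $\G_{2}$ the exclusion of the residue field $\GF2$ is exactly what makes the underlying rank two identities valid. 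This explicit rank two computation, above all for $\G_{2}$, is the main obstacle; everything else is bookkeeping with the commutator formula and with additivity of $z_{\alpha}(\,\cdot\,,\eta)$ in the first argument.
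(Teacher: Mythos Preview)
The paper does not actually prove this statement: Theorem~A is quoted from \cite{RNZ-generation} (the left inclusion in Theorem~3.1 there), and the present paper only remarks that the underlying calculation ``is immediate for simply laced systems, but rather non-trivial in the exceptional cases $\Phi=\C_2,\G_2$''. So there is no in-paper proof to compare against; I can only evaluate your outline on its own merits.

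Your overall strategy is the right one and matches the shape of the argument in \cite{RNZ-generation}: reduce to the Stein--Tits--Vaserstein generators $z_{\alpha}(\xi\eta',\eta)$, express $x_{\alpha}(\xi\eta')$ as a product of commutators $[a,b]$ with $a\in E(\Phi,I)$, $b\in E(\Phi,J)$ built from roots $\delta\ne\pm\alpha$, and then conjugate by $x_{-\alpha}(\eta)$ using that such conjugation preserves levels. That reduction is clean and correct.

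Two caveats. First, your taxonomy of the ``easy'' cases is slightly off: a short root of $\B_l$ ($l\ge 3$) cannot be written as $\beta+\gamma$ with $N_{\beta\gamma11}=\pm1$ and \emph{no} extra factor --- any decomposition is long\,+\,short and produces an $x_{\beta+2\gamma}(\pm\xi\eta'^2)$ tail. This is harmless (the tail is long and sits in an $\A_2$, so you absorb it first), but it is not literally ``no obstructing extra factors''. Second, and more importantly, for $\C_2$ and $\G_2$ you have written a plan (``descent on root height'', invoke (*) to split $\theta=\theta^2r_1+2\theta r_2$), not a proof. The paper itself flags exactly these cases as ``rather non-trivial'', and the actual verification in \cite{RNZ-generation} (their Theorem~3.1, building on Lemma~17 of \cite{RNZ2}) is a genuine case-by-case computation with the rank~$2$ commutator formulae in which one must keep track simultaneously of where the extra factors land \emph{and} of the requirement $\delta\ne\pm\alpha$ so that the conjugation step still works. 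Your sketch does not yet confront, for instance, the short root of $\G_2$, where the tail involves three further roots and the absence of $\GF2$ (not the $\C_l$ divisibility clause) is what is used. Until those rank~$2$ identities are written out, the proposal is an outline of the correct proof rather than the proof itself.
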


\begin{Oldie}\label{Oldie:2}
Let $\Phi$ be a reduced irreducible root system of rank $\ge 2$
and let $I,J$ be two ideals of a commutative ring $R$. Then the
mixed commutator subgroup
$[E(\Phi,I),E(\Phi,J)]$ is normal in $E(\Phi,R)$. 
\end{Oldie}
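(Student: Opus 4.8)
The plan is to verify normality on generators, leaning on the description of the generators of the mixed commutator subgroup $[E(\Phi,R,I),E(\Phi,R,J)]$ furnished by the main result of \cite{RNZ-generation}, together with Theorem~\ref{Oldie:1}. Since $E(\Phi,R)$ is generated by the root unipotents $x_\gamma(\zeta)$, $\gamma\in\Phi$, $\zeta\in R$, it suffices to show that the conjugate ${}^{x_\gamma(\zeta)}h$ lies in $H:=[E(\Phi,I),E(\Phi,J)]$ for every $\gamma\in\Phi$, every $\zeta\in R$, and every $h$ ranging over a fixed set of generators of $H$. One takes the generating set consisting of the relative elementary subgroup $E(\Phi,R,IJ)$ --- which lies in $H$ by Theorem~\ref{Oldie:1} and is normal in $E(\Phi,R)$ by construction --- together with the unrelativised commutators $[x_\alpha(\xi),x_\beta(\eta)]$, $\alpha,\beta\in\Phi$, $\xi\in I$, $\eta\in J$; that these (modulo $E(\Phi,R,IJ)$, and possibly together with finitely many further elementary commutators of levels $I$ and $J$) generate $H$ is read off from the generating set for $[E(\Phi,R,I),E(\Phi,R,J)]$ of \cite{RNZ-generation}. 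Since conjugates of elements of $E(\Phi,R,IJ)$ stay in $E(\Phi,R,IJ)\le H$, the whole problem concentrates on the commutators $[x_\alpha(\xi),x_\beta(\eta)]$.

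The easy case is conjugation by $x_\gamma(\zeta)$ with $\gamma\ne-\alpha$ and $\gamma\ne-\beta$. By the Chevalley commutator formula ${}^{x_\gamma(\zeta)}x_\alpha(\xi)\in E(\Phi,I)$ and ${}^{x_\gamma(\zeta)}x_\beta(\eta)\in E(\Phi,J)$, because every factor produced by the formula is a root unipotent whose argument is a monomial $\zeta^i\xi^j$ (respectively $\zeta^i\eta^j$) with $j\ge1$, hence lies in $I$ (respectively in $J$). Therefore ${}^{x_\gamma(\zeta)}[x_\alpha(\xi),x_\beta(\eta)]=\bigl[{}^{x_\gamma(\zeta)}x_\alpha(\xi),\,{}^{x_\gamma(\zeta)}x_\beta(\eta)\bigr]$ is again a commutator of an element of $E(\Phi,I)$ with an element of $E(\Phi,J)$, so it lies in $H$; the sub-case $\beta=\alpha$ is even trivial, the root unipotents $x_\alpha(\,\cdot\,)$ commuting.

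The hard case --- and the main obstacle --- is conjugation by $x_{-\alpha}(\zeta)$ (and, symmetrically, by $x_{-\beta}(\zeta)$). Now ${}^{x_{-\alpha}(\zeta)}x_\alpha(\xi)=z_\alpha(\xi,\zeta):=x_{-\alpha}(\zeta)x_\alpha(\xi)x_{-\alpha}(-\zeta)$ is a relative elementary generator of level $I$ that escapes $E(\Phi,I)$, so the preceding bookkeeping breaks down. One writes $z_\alpha(\xi,\zeta)=x_\alpha(\xi)\cdot[x_\alpha(\xi),x_{-\alpha}(-\zeta)]$, uses ${}^{x_{-\alpha}(\zeta)}x_\beta(\eta)\in E(\Phi,J)$ when $\beta\ne\pm\alpha$, and expands ${}^{x_{-\alpha}(\zeta)}[x_\alpha(\xi),x_\beta(\eta)]$ by the commutator identities $[ab,c]={}^a[b,c]\,[a,c]$ and $[a,bc]=[a,b]\,{}^b[a,c]$. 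After collecting one is left with commutators $[x_{\alpha'}(\xi'),x_{\beta'}(\eta')]$ of the admissible type, elements of $E(\Phi,R,IJ)$, and triple commutators $\bigl[\,[x_\alpha(\pm\xi),x_{-\alpha}(\pm\zeta)],\,x_\beta(\eta)\,\bigr]$ of levels $I$, $R$, $J$, and the task is to recognise these triple commutators back inside $H$. This cannot be achieved by commutator calculus alone --- the Hall--Witt identity only places such a triple commutator in a conjugate of $H$ --- and it is exactly here that one must invoke the structural description of $[E(\Phi,R,I),E(\Phi,R,J)]$ from \cite{RNZ-generation} together with Theorem~\ref{Oldie:1}. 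Finally, the antiparallel commutators $[x_\alpha(\xi),x_{-\alpha}(\eta)]$ in the multiply-laced cases force one to control the bottom level, which is precisely what the umbrella assumption~(*) delivers: in types $\C_l$ and $\G_2$ the argument closes only under the residue-field condition, respectively the condition $\theta\in\theta^2R+2\theta R$, in the same way as in the computation underlying \cite{RNZ-generation}, Theorem~3.1 (our Theorem~\ref{Oldie:1}), and \cite{RNZ2}, Lemma~17. With (*) in force this yields ${}^{x_\gamma(\zeta)}H\subseteq H$ for all $\gamma$ and $\zeta$, whence $H$ is normal in $E(\Phi,R)$.
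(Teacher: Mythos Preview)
Your proposal has a genuine gap at its very first step. You claim that $H=[E(\Phi,I),E(\Phi,J)]$ is generated by $E(\Phi,R,IJ)$ together with the elementary commutators $[x_\alpha(\xi),x_\beta(\eta)]$, and you say this is ``read off from the generating set for $[E(\Phi,R,I),E(\Phi,R,J)]$'' of \cite{RNZ-generation}. But Theorem~\ref{Oldie:6} lists generators of the \emph{relativised} commutator $[E(\Phi,R,I),E(\Phi,R,J)]$, not of $H$; a priori $H$ is only a subgroup of it. Your proposed generating set certainly lies in $H$, but there is no reason it should generate all of $H$: the commutator subgroup $[A,B]$ is generated by all $[a,b]$, and reducing to commutators of generators of $A$ and $B$ via $[a_1a_2,b]={}^{a_1}[a_2,b]\cdot[a_1,b]$ only shows that $H$ is generated by the ${}^c[x_\alpha(\xi),x_\beta(\eta)]$ with $c$ ranging over $\langle E(\Phi,I),E(\Phi,J)\rangle$ --- conjugates you have not controlled. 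In effect, knowing that your set generates $H$ is essentially equivalent to Theorem~\ref{The:1} of the present paper, which is the main new result and whose proof \emph{uses} Theorem~\ref{Oldie:2}.

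The same circularity reappears in your ``hard case''. You acknowledge that the triple commutators $\bigl[[x_\alpha(\pm\xi),x_{-\alpha}(\pm\zeta)],x_\beta(\eta)\bigr]$ cannot be placed in $H$ by commutator calculus alone, and you appeal vaguely to \cite{RNZ-generation} plus Theorem~\ref{Oldie:1}. But the element $[x_\alpha(\xi),x_{-\alpha}(\zeta)]$ times $x_\alpha(\xi)^{-1}$ is exactly $z_\alpha(\xi,\zeta)^{-1}$, so what you need is that $[z_\alpha(\xi,\zeta),x_\beta(\eta)]\in H$ --- and for $\beta=\alpha$ this is precisely the Main Lemma of the paper, the one new calculation that the authors explicitly did \emph{not} have when Theorem~\ref{Oldie:2} was first proven.

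For comparison: the paper does not reprove Theorem~\ref{Oldie:2} at all. It is quoted as Corollary~5.2 of \cite{RNZ-generation}, derived there as an easy consequence of Theorem~\ref{Oldie:1} and the generator list (Theorem~\ref{Oldie:6}) by an argument that does not pass through the equality $H=[E(\Phi,R,I),E(\Phi,R,J)]$. Your sketch, as it stands, does attempt to pass through that equality, and therefore either assumes what is to be proved or requires the full strength of the present paper's Main Lemma.
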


\begin{Oldie}\label{Oldie:3}
Let $\Phi$ be a reduced irreducible root system of rank $\ge 2$
and let $I,J$ be two ideals of a commutative ring $R$. Then
$$ [E(\Phi,I),E(\Phi,R,J)]=[E(\Phi,R,I),E(\Phi,R,J)]. $$
\end{Oldie}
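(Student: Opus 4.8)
The plan is to prove the non-trivial inclusion $[E(\Phi,R,I),E(\Phi,R,J)]\le[E(\Phi,I),E(\Phi,R,J)]$, the reverse being obvious since $E(\Phi,I)\le E(\Phi,R,I)$. Since $E(\Phi,R,J)$ is normalised by $E(\Phi,R)$, the group $[E(\Phi,I),E(\Phi,R,J)]$ is normal in $E(\Phi,R)$ by a standard commutator calculus argument (the Hall--Witt identity, together with the fact that for a subgroup $H\trianglelefteq G$ and any subgroup $K\le G$ the commutator $[K,H]$ is normalised by $\langle K,H\rangle$, here applied with $K=E(\Phi,I)$, $H=E(\Phi,R,J)$ and using that $E(\Phi,R)$ normalises $H$). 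Granting this normality, it suffices to show that every generator of $E(\Phi,R,I)=E(\Phi,I)^{E(\Phi,R)}$, that is, every conjugate ${}^{g}x_{\alpha}(\xi)$ with $g\in E(\Phi,R)$, $\alpha\in\Phi$, $\xi\in I$, commutes with $E(\Phi,R,J)$ modulo $[E(\Phi,I),E(\Phi,R,J)]$.

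First I would record the elementary commutator identity
$$ [{}^{g}x,\,h] = {}^{g}\!\big[x,\,{}^{g^{-1}}h\big] $$
for $g\in E(\Phi,R)$, $x=x_{\alpha}(\xi)\in E(\Phi,I)$ and $h\in E(\Phi,R,J)$. Since $E(\Phi,R,J)$ is normal in $E(\Phi,R)$, the element ${}^{g^{-1}}h$ again lies in $E(\Phi,R,J)$, so the inner commutator $[x,{}^{g^{-1}}h]$ lies in $[E(\Phi,I),E(\Phi,R,J)]$; conjugating by $g\in E(\Phi,R)$ and invoking the normality of $[E(\Phi,I),E(\Phi,R,J)]$ in $E(\Phi,R)$ established in the previous paragraph, we conclude ${}^{g}[x,{}^{g^{-1}}h]\in[E(\Phi,I),E(\Phi,R,J)]$. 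Thus $[{}^{g}x,h]\in[E(\Phi,I),E(\Phi,R,J)]$ for all such generators. A routine induction on commutator length (again via the Hall--Witt identity, or by writing a general element of $E(\Phi,R,I)$ as a product of conjugates ${}^{g}x_{\alpha}(\xi)^{\pm 1}$ and expanding $[ab,h]={}^{a}[b,h]\cdot[a,h]$) then upgrades this to $[E(\Phi,R,I),E(\Phi,R,J)]\le[E(\Phi,I),E(\Phi,R,J)]$, completing the proof.

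The only genuinely non-formal ingredient is the normality of $[E(\Phi,I),E(\Phi,R,J)]$ in $E(\Phi,R)$, which I would either cite directly from \cite{RNZ-generation} as part of the generation description of $[E(\Phi,R,I),E(\Phi,R,J)]$, or deduce from the combination of Theorems \ref{Oldie:1} and \ref{Oldie:2}: indeed $[E(\Phi,I),E(\Phi,R,J)]$ sits between $[E(\Phi,I),E(\Phi,J)]$, which is normal in $E(\Phi,R)$ by Theorem \ref{Oldie:2}, and $[E(\Phi,R,I),E(\Phi,R,J)]$, and one checks that conjugating a generator of $[E(\Phi,I),E(\Phi,R,J)]$ stays inside it using $E(\Phi,R)$-normality of $E(\Phi,R,J)$ and the identity above. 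I expect this normality bookkeeping, and the care needed for the umbrella assumption (*) to be in force when invoking Theorems \ref{Oldie:1}--\ref{Oldie:2} in the cases $\Phi=\C_l,\G_2$, to be the main points requiring attention; the rest is pure commutator formalism. In fact, once Theorem \ref{Oldie:2} is available, the cleanest route is: $[E(\Phi,R,I),E(\Phi,R,J)]=[E(\Phi,I)^{E(\Phi,R)},E(\Phi,R,J)]$ is generated by conjugates ${}^{g}[E(\Phi,I),{}^{g^{-1}}E(\Phi,R,J)]={}^{g}[E(\Phi,I),E(\Phi,R,J)]$, and since $[E(\Phi,I),E(\Phi,R,J)]$ is $E(\Phi,R)$-normal all these conjugates coincide with $[E(\Phi,I),E(\Phi,R,J)]$ itself.
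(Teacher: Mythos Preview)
Your reduction is correct in spirit: the whole content of Theorem~\ref{Oldie:3} is exactly the normality of $[E(\Phi,I),E(\Phi,R,J)]$ in $E(\Phi,R)$, and once that is known the identity $[{}^{g}x,h]={}^{g}[x,{}^{g^{-1}}h]$ together with normality of $E(\Phi,R,J)$ finishes the job. The problem is that you never actually establish this normality. Your first justification --- that $[K,H]$ is normalised by $\langle K,H\rangle$ --- only gives normality in $\langle E(\Phi,I),E(\Phi,R,J)\rangle$, which is in general a proper subgroup of $E(\Phi,R)$ (take $I=J$ a proper ideal). The hypothesis $H\trianglelefteq G$ does \emph{not} upgrade $[K,H]\trianglelefteq\langle K,H\rangle$ to $[K,H]\trianglelefteq G$; that requires $K\trianglelefteq G$ as well, and $E(\Phi,I)$ is typically not normal. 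Your second and third justifications are circular: you conjugate a generator $[x,h]$ by $g\in E(\Phi,R)$, rewrite ${}^{g}[x,h]={}^{g}[x,{}^{g^{-1}}({}^{g}h)]$, land in ${}^{g}[E(\Phi,I),E(\Phi,R,J)]$, and then invoke the very normality you are trying to prove to bring this back into $[E(\Phi,I),E(\Phi,R,J)]$. In fact a short formal argument shows $[E(\Phi,R,I),E(\Phi,R,J)]$ equals the $E(\Phi,R)$-normal closure of $[E(\Phi,I),E(\Phi,R,J)]$, so your normality claim is \emph{equivalent} to Theorem~\ref{Oldie:3} itself, not a stepping stone to it.

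The route actually taken in \cite{RNZ-generation} (and recalled in the present paper) avoids this circularity by using the explicit generating set of $\big[E(\Phi,R,I),E(\Phi,R,J)\big]$ from Theorem~\ref{Oldie:6}. One simply checks that each of the three generator types already lies in $[E(\Phi,I),E(\Phi,R,J)]$: the generators $\big[x_{\alpha}(\xi),z_{\alpha}(\zeta,\eta)\big]$ are visibly commutators of an element of $E(\Phi,I)$ with an element of $E(\Phi,R,J)$; the generators $\big[x_{\alpha}(\xi),x_{-\alpha}(\zeta)\big]$ lie in $[E(\Phi,I),E(\Phi,J)]$; and the generators $z_{\alpha}(\xi\zeta,\eta)$ lie in $E(\Phi,R,IJ)\le[E(\Phi,I),E(\Phi,J)]$ by Theorem~\ref{Oldie:1}. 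This is where the real input (and the umbrella assumption (*)) enters --- the generator theorem is doing the work that your commutator formalism cannot do on its own.
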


What we have not noticed when writing \cite{RNZ-generation} is
that modulo some further elementary calculations involving our 
generators of $[E(\Phi,R,I),E(\Phi,R,J)]$, 
Theorems~\ref{Oldie:1}--\ref{Oldie:3} 
admit the following common generalisation.

\begin{The}\label{The:1}
Let $\Phi$ be a reduced irreducible root system of rank $\ge 2$
and let $I,J$ be two ideals of a commutative ring $R$. Then
$$ [E(\Phi,I),E(\Phi,J)]=[E(\Phi,R,I),E(\Phi,R,J)]. $$
\end{The}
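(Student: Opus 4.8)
The inclusion $[E(\Phi,I),E(\Phi,J)]\le[E(\Phi,R,I),E(\Phi,R,J)]$ is trivial since $E(\Phi,I)\le E(\Phi,R,I)$ and $E(\Phi,J)\le E(\Phi,R,J)$. So the whole content is the reverse inclusion, and the plan is to show that every generator of $[E(\Phi,R,I),E(\Phi,R,J)]$ already lies in the smaller group $[E(\Phi,I),E(\Phi,J)]$. By Theorem~\ref{Oldie:2} the target $H:=[E(\Phi,I),E(\Phi,J)]$ is normal in $E(\Phi,R)$, which is exactly what makes such a generator-by-generator verification feasible: modulo $H$ we may freely conjugate by arbitrary elements of $E(\Phi,R)$. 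The first step is therefore to recall from \cite{RNZ-generation} the explicit list of generators of $[E(\Phi,R,I),E(\Phi,R,J)]$ as a normal subgroup of $E(\Phi,R)$ — these are commutators of the shape $[x_\a(\xi),\,{}^{g}x_\b(\eta)]$ and $[x_\a(\xi),\,x_\b(\zeta)^{z_{\g}(\theta)}]$-type elements with $\xi\in I$, $\eta\in J$ (and symmetrically), together with the elementary generators $x_\a(\xi\eta)$ of $E(\Phi,R,IJ)$, up to the umbrella assumption $(*)$.

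The core of the argument is then a sequence of commutator-calculus identities reducing each such generator to a product of elements of the form $[y,z]$ with $y\in E(\Phi,I)$, $z\in E(\Phi,J)$, modulo $H$. The main tool is the Hall--Witt identity together with the standard commutator relations $[x_\a(\xi),x_\b(\eta)]=\prod x_{i\a+j\b}(N_{\a\b ij}\xi^i\eta^j)$; the point is that a conjugate $^{g}x_\b(\eta)$ with $g\in E(\Phi,R)$ can be rewritten, using Theorem~\ref{Oldie:1} (so that $E(\Phi,R,IJ)\le H$) and the identity $[a,{}^{g}b]={}^{g}\big[{}^{g^{-1}}a,b\big]$, as something congruent modulo $H$ to a genuine product of $x_\b(\eta)$'s of level $J$ conjugated only by elements that can be absorbed. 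Concretely: write $g$ as a product of $x_{\g}(t)$ with $t\in R$; a typical commutator $[x_{\g}(t),x_\b(\eta)]$ with $\eta\in J$ lands in $E(\Phi,J)$ by the Chevalley commutator formula, so $^{x_\g(t)}x_\b(\eta)=x_\b(\eta)\cdot[x_\g(t),x_\b(\eta)]^{-1}$ already sits in $E(\Phi,J)$; iterating and using normality of $H$ pushes the whole generator into $H$. The symmetric generators (with the roles of $I$ and $J$ swapped, or with the extra inner conjugation) are handled the same way, and the generators $x_\a(\xi\eta)\in E(\Phi,R,IJ)$ are absorbed by Theorem~\ref{Oldie:1}.

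I expect the main obstacle to be precisely the bookkeeping in the exceptional and doubly-laced cases, where the Chevalley commutator formula produces terms $x_{i\a+j\b}(c\,\xi^i\eta^j)$ with $i$ or $j$ equal to $2$ or $3$: for these one must check that $\xi^2\eta\in IJ$, $\xi\eta^2\in I J$, etc., remain in the correct level and that the structure constants do not obstruct the reduction — this is where the hypothesis $(*)$ (every $\theta\in R$ lies in $\theta^2R+2\theta R$ for $\Phi=\C_l$, and no residue field $\GF{2}$ for $\C_2,\G_2$) is consumed, exactly as in \cite{RNZ2}, Lemma 17 and \cite{RNZ-generation}, Theorem 3.1. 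A secondary nuisance is that some generators of $[E(\Phi,R,I),E(\Phi,R,J)]$ in the list of \cite{RNZ-generation} are not literally commutators but conjugates of commutators by a single $x_\g$; for those one applies the three-subgroup-type manipulation once more, noting that $[E(\Phi,I),[E(\Phi,R),E(\Phi,J)]]$ and its permutations lie in $H$ by Theorem~\ref{Oldie:2}. Once every generator on the list is shown to lie in $H$, the normal-subgroup generation statement from \cite{RNZ-generation} gives $[E(\Phi,R,I),E(\Phi,R,J)]\le H$, completing the proof.
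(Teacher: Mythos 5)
Your overall strategy (reduce to the generators of $[E(\Phi,R,I),E(\Phi,R,J)]$ from \cite{RNZ-generation} and push each one into $H=[E(\Phi,I),E(\Phi,J)]$, using Theorem~\ref{Oldie:1} to absorb $E(\Phi,R,IJ)$ and Theorem~\ref{Oldie:2} to conjugate freely modulo $H$) is exactly the paper's strategy. But your execution has a genuine gap at the only hard point. The generation theorem of \cite{RNZ-generation} (Theorem~\ref{Oldie:6} here) gives generators \emph{as a group}, and the problematic type is $\big[x_{\alpha}(\xi),z_{\alpha}(\zeta,\eta)\big]$ with $z_{\alpha}(\zeta,\eta)={}^{x_{-\alpha}(\eta)}x_{\alpha}(\zeta)$, i.e.\ a conjugate of $x_{\alpha}(\zeta)$ by the \emph{opposite} root element $x_{-\alpha}(\eta)$, $\eta\in R$ arbitrary. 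Your key step --- ``$[x_{\gamma}(t),x_{\beta}(\eta)]$ with $\eta\in J$ lands in $E(\Phi,J)$ by the Chevalley commutator formula, so ${}^{x_{\gamma}(t)}x_{\beta}(\eta)$ sits in $E(\Phi,J)$; iterating pushes the whole generator into $H$'' --- fails precisely in this case, because the Chevalley commutator formula only applies for $\gamma\neq-\beta$, and ${}^{x_{-\beta}(t)}x_{\beta}(\eta)$ does \emph{not} lie in $E(\Phi,J)$ in general. If your iteration argument were valid it would show that $E(\Phi,J)$ is normal in $E(\Phi,R)$, i.e.\ $E(\Phi,J)=E(\Phi,R,J)$, which is false in general and is the very reason the relative group $E(\Phi,R,J)$ and the elements $z_{\alpha}(\zeta,\eta)$ enter the picture at all. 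So the ``absorb the conjugation'' reduction collapses exactly where the new content of the theorem lies.

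What the paper actually does at this point (its Main Lemma) is a genuine rank-$2$ computation, not a formal commutator-calculus absorption: one embeds $\alpha$ into a subsystem of type $\A_2$, $\C_2$ or $\G_2$, writes $x_{\alpha}(-\xi)$ via the Chevalley commutator formula as $[x_{\gamma}(1),x_{\beta}(\xi)]$ times explicit tail factors of level $I$, and then analyses ${}^{z_{\alpha}(\zeta,\eta)}[x_{\gamma}(1),x_{\beta}(\xi)]$ term by term. The error terms are controlled by the Levi-decomposition lemma $[L_r(I),U_r(J)]\le U_r(IJ)$ (so conjugation by $z_{\alpha}(\zeta,\eta)$ changes the unipotent-radical factors only by elements of $E(\Phi,IJ)$), and the resulting pieces are absorbed into $H$ via Theorems~\ref{Oldie:1} and~\ref{Oldie:2}. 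Also note that the hypothesis $(*)$ is consumed already in Theorems~\ref{Oldie:1}, \ref{Oldie:2} and \ref{Oldie:6}, not in fresh bookkeeping of structure constants in the new calculation, as you suggest. To repair your proposal you would need to supply precisely this missing computation for $\big[x_{\alpha}(\xi),z_{\alpha}(\zeta,\eta)\big]$ in each of the rank-$2$ configurations.
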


As a matter of fact, Theorem~\ref{The:1} can be derived from 
the main result of
\cite{RNZ-generation}, Theorem 1.3. That theorem, which
we recall as Theorem~\ref{Oldie:5} in \S~\ref{sec:old}, 
lists three types of generators of
$[E(\Phi,R,I),E(\Phi,R,J)]$. Of those three types, the last two are 
contained already in $[E(\Phi,I),E(\Phi,J)]$, the second one by the 
very definition, the last one by the above Theorem~\ref{Oldie:1}. It
remains to show that the first type of generators, those of the form
$\big[x_{\alpha}(\xi),z_{\alpha}(\zeta,\eta)\big]$ (see \S~\ref{sec:old} for
the precise definitions)
are also in $[E(\Phi,I),E(\Phi,J)]$. This is exactly the main new 
calculation in the present paper, the rest was either known before, 
or contained in \cite{RNZ-generation}.

Actually, our Theorem~\ref{The:1} allows also to unrelativise the
birelative standard commutator formula, established in this context 
by You Hong \cite{you92}, via level calculations, and by ourselves 
\cite{RNZ2} via a version of relative localisation. Namely, let 
$\rho_I:R\map R/I$ be the reduction modulo $I$. By functoriality,
it defines the group homomorphism $\rho_I:G(\Phi,R)\map G(\Phi,R/I)$.
The kernel of $\rho_I$ is denoted by $G(\Phi,R,I)$ and is called
the \emph{principal congruence subgroup\/} of $G(\Phi,R)$ of level $I$.
In turn, the full pre-image of the centre of $G(\Phi,R/I)$ with
respect to the reduction homomorphism $\rho_{I}$ is called the
\emph{full congruence subgroup\/} of level $I$, and
is denoted by $C(\Phi,R,I)$. Now, the the {\it birelative standard 
commutator formula\/}, see \cite{RNZ2} , Theorem~1, can be stated 
as follows.

\begin{Oldie}\label{Oldie:4}
Let\/ $\Phi$ be a reduced irreducible root system of rank $\ge 2$.
Further, let\/ $R$ be a commutative ring, and\/ $I,J\trianglelefteq R$
be two ideals of\/ $R$. Then
$$ [E(\Phi,R,I),C(\Phi,R,J)]=[E(\Phi,R,I),E(\Phi,R,J)]. $$
\end{Oldie}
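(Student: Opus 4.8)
The inclusion $[E(\Phi,R,I),E(\Phi,R,J)]\le[E(\Phi,R,I),C(\Phi,R,J)]$ is immediate from $E(\Phi,R,J)\le C(\Phi,R,J)$; set $H=[E(\Phi,R,I),E(\Phi,R,J)]$. For the reverse inclusion I would begin by observing that $H$ is normal in the whole group $G(\Phi,R)$: by the classical normality theorem both $E(\Phi,R,I)$ and $E(\Phi,R,J)$ are normal in $G(\Phi,R)$, and the mutual commutator of two normal subgroups is normal. The subgroup $C(\Phi,R,J)$, being the full preimage of the centre of $G(\Phi,R/J)$ under $\rho_J$, is normal in $G(\Phi,R)$ as well. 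Invoking the commutator identities $[uu',c]={}^u[u',c]\,[u,c]$ and $[{}^hx,c]={}^h[x,{}^{h^{-1}}c]$ together with ${}^gH=H$ for every $g\in G(\Phi,R)$, the claim $[E(\Phi,R,I),C(\Phi,R,J)]\le H$ reduces to the single assertion
$$ [x_{\a}(\xi),c]\in H\qquad\text{for all }\a\in\Phi,\ \xi\in I,\ c\in C(\Phi,R,J). $$

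The expected shape of the answer is dictated by a level count: since $c$ is central modulo $J$ while $x_{\a}(\xi)$ has level $I$, a direct congruence computation shows that $[x_{\a}(\xi),c]$ lies in the principal congruence subgroup $G(\Phi,R,IJ)$, and the classical single-relative commutator formula $[E(\Phi,R),C(\Phi,R,J)]=E(\Phi,R,J)$ places it simultaneously in $E(\Phi,R,J)$. Were one able to promote this to membership in the \emph{elementary} subgroup $E(\Phi,R,IJ)$, Theorem~\ref{Oldie:1} combined with Theorem~\ref{The:1} would give $E(\Phi,R,IJ)\le[E(\Phi,I),E(\Phi,J)]=H$ and finish the argument. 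The whole difficulty is precisely this passage from congruence level $IJ$ to genuine membership in $H$.

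To effect it I would deploy the relative localisation of \cite{RNZ2}. Fixing $\a,\xi,c$, localise at each maximal ideal $\gm$ and track the image of $[x_{\a}(\xi),c]$ in $G(\Phi,R_{\gm})$. Over the local ring $R_{\gm}$ the full congruence subgroup factors as $C(\Phi,R_{\gm},J_{\gm})=D\cdot E(\Phi,R_{\gm},J_{\gm})$, where $D$ is a subgroup of the split torus whose elements reduce to central elements modulo $J_{\gm}$. Writing the local image of $c$ as $de$ with $d\in D$ and $e\in E(\Phi,R_{\gm},J_{\gm})$, one expands
$$ [x_{\a}(\xi),de]=[x_{\a}(\xi),d]\cdot{}^{d}[x_{\a}(\xi),e]. $$
Here $[x_{\a}(\xi),e]\in[E(\Phi,I_{\gm}),E(\Phi,R_{\gm},J_{\gm})]=H_{\gm}$ by Theorem~\ref{Oldie:3}, where $H_{\gm}=[E(\Phi,R_{\gm},I_{\gm}),E(\Phi,R_{\gm},J_{\gm})]$ is the localisation of $H$; and the torus factor is computed outright as $[x_{\a}(\xi),d]=x_{\a}\big((1-\a(d))\xi\big)$, where centrality of $\rho_J(d)$ forces $\a(d)\equiv1\pmod{J_{\gm}}$, so $(1-\a(d))\xi\in I_{\gm}J_{\gm}$ and this factor is an elementary root unipotent of level $I_{\gm}J_{\gm}$, again lying in $H_{\gm}$. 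Thus locally $[x_{\a}(\xi),c]\in H_{\gm}$ at every $\gm$.

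The main obstacle is the patching: passing from these local conclusions back to the global statement $[x_{\a}(\xi),c]\in H$. This calls for the level-controlled local--global principle for relative elementary subgroups, in which the local factorisations are rewritten over a common bounded denominator so as to produce an element of $E(\Phi,R)$ of bounded root-subgroup support that is trivial modulo $H$ in every localisation, hence trivial modulo $H$. The delicate point is keeping the recovered level inside $IJ$ rather than merely inside $J$; it is here that the umbrella assumption (*) and the explicit generating set of $[E(\Phi,R,I),E(\Phi,R,J)]$ are used. A localisation-free alternative, due to You Hong \cite{you92}, is instead to make the congruence computation of the second paragraph fully effective, rewriting $[x_{\a}(\xi),c]$ as an explicit product of generators of $H$ by repeated use of the Chevalley commutator formula.
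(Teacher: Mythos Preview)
The paper does not itself prove Theorem~\ref{Oldie:4}: it is quoted as a known result, established by You Hong \cite{you92} via level calculations and independently in \cite{RNZ2} via relative localisation, and is then combined with Theorem~\ref{The:1} to derive Theorems~\ref{The:2} and~\ref{The:3}. There is thus no in-paper argument to compare your sketch against.

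On the sketch itself: your outline follows the broad architecture of the localisation approach of \cite{RNZ2}, and you correctly identify the patching step as the crux. Two remarks. First, the appeal to Theorem~\ref{The:1} in your second paragraph is superfluous: $E(\Phi,R,IJ)\le H$ already follows from Theorem~\ref{Oldie:1} together with the trivial inclusion $[E(\Phi,I),E(\Phi,J)]\le[E(\Phi,R,I),E(\Phi,R,J)]$. Second, and more seriously, the patching you describe as ``delicate'' is essentially the whole proof; the naive implication ``$[x_{\a}(\xi),c]$ maps into $H_{\gm}$ for every maximal $\gm$, hence lies in $H$'' is not valid as stated, since $H$ is not an $R$-submodule and membership in it does not localise. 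The actual argument in \cite{RNZ2} does not localise the element $c$ and then patch: it develops a controlled-denominator commutator calculus (conjugation and splitting lemmas that never leave the relative elementary subgroups) and is substantially longer than your outline suggests. Likewise, the level-calculation route of \cite{you92} you mention at the end is a sustained elementwise computation, not a short formal step.
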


Now, Theorems~\ref{The:1} and~\ref{Oldie:4} immediately imply
the following result.

\begin{The}\label{The:2}
Let $\Phi$ be a reduced irreducible root system of rank $\ge 2$
and let $I,J$ be two ideals of a commutative ring $R$. Then
$$ [E(\Phi,I),C(\Phi,R,J)]=[E(\Phi,I),E(\Phi,J)]. $$
\end{The}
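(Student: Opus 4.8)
The plan is to obtain Theorem~\ref{The:2} as a purely formal consequence of Theorem~\ref{The:1} and the birelative standard commutator formula recorded as Theorem~\ref{Oldie:4}; no fresh commutator computation is required here, since the decisive calculation has already been carried out in the proof of Theorem~\ref{The:1}.

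First I would dispose of the inclusion ``$\supseteq$''. Every generator $x_{\alpha}(\xi)$ of $E(\Phi,J)$ with $\xi\in J$ satisfies $\rho_J(x_{\alpha}(\xi))=x_{\alpha}(0)=1$, so $E(\Phi,J)\le\ker\rho_J=G(\Phi,R,J)$, and a fortiori $E(\Phi,J)\le C(\Phi,R,J)$, since the trivial subgroup is central in $G(\Phi,R/J)$. Monotonicity of the commutator subgroup in its second argument then gives
$$ [E(\Phi,I),E(\Phi,J)]\le[E(\Phi,I),C(\Phi,R,J)]. $$

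For the reverse inclusion ``$\subseteq$'' I would start from $E(\Phi,I)\le E(\Phi,R,I)$ --- immediate from the definition of $E(\Phi,R,I)$ as the $E(\Phi,R)$-normal closure of $E(\Phi,I)$ --- which yields $[E(\Phi,I),C(\Phi,R,J)]\le[E(\Phi,R,I),C(\Phi,R,J)]$. By Theorem~\ref{Oldie:4} the right-hand side equals $[E(\Phi,R,I),E(\Phi,R,J)]$, and by Theorem~\ref{The:1} this in turn equals $[E(\Phi,I),E(\Phi,J)]$. Chaining the three steps gives
$$ [E(\Phi,I),C(\Phi,R,J)]\le[E(\Phi,I),E(\Phi,J)], $$
which together with the first inclusion proves the claimed equality.

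The only ``hard'' ingredient is thus external to this argument: it is entirely absorbed into Theorem~\ref{The:1}, whose proof must show that the generators of the form $[x_{\alpha}(\xi),z_{\alpha}(\zeta,\eta)]$ of $[E(\Phi,R,I),E(\Phi,R,J)]$ already lie in $[E(\Phi,I),E(\Phi,J)]$. Granting that, Theorem~\ref{The:2} is a two-line deduction; I would also remark in passing that specialising to $J=I$ recovers the normality of $E(\Phi,I)$ in $C(\Phi,R,I)$ announced in the abstract, since then $[E(\Phi,I),C(\Phi,R,I)]=[E(\Phi,I),E(\Phi,I)]\le E(\Phi,I)$.
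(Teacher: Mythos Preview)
Your proof is correct and follows exactly the same route as the paper's: establish the chain $[E(\Phi,I),E(\Phi,J)]\le[E(\Phi,I),C(\Phi,R,J)]\le[E(\Phi,R,I),C(\Phi,R,J)]=[E(\Phi,R,I),E(\Phi,R,J)]$ via the obvious inclusions and Theorem~\ref{Oldie:4}, then close the loop with Theorem~\ref{The:1}. Your added justification for $E(\Phi,J)\le C(\Phi,R,J)$ and the closing remark on specialising to $J=I$ (the content of Theorem~\ref{The:3}) are welcome elaborations but do not depart from the paper's argument.
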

\begin{proof} Indeed, one has
\begin{multline*} 
[E(\Phi,I),E(\Phi,J)]\le [E(\Phi,I),C(\Phi,R,J)]\le\\ 
[E(\Phi,R,I),C(\Phi,R,J)]=[E(\Phi, R, I),E(\Phi, R, J)], 
\end{multline*} 
 \noindent
 where the first two inclusions is obvious, whereas the last equality
 is Theorem~\ref{Oldie:4}. On the other hand, the left hand side
 equals the right hand side by Theorem~\ref{The:1}.
\end{proof}

Setting $I=J$ in Theorem~\ref{The:2}, we get the following freakish corollary.

\begin{The}\label{The:3}
Let $\Phi$ be a reduced irreducible root system of rank $\ge 2$
and let $I$ be an ideal of a commutative ring $R$. Then 
$E(\Phi, I)$ is normal in $C(\Phi,R,I)$.
\end{The}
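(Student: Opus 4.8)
The plan is to obtain this statement as the immediate specialisation $J=I$ of Theorem~\ref{The:2}, exactly as announced in the text, so the whole argument is a short piece of elementary group theory on top of what has already been proved.

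\textbf{Step 1: $E(\Phi,I)\le C(\Phi,R,I)$.} Each generator $x_{\alpha}(\xi)$ of $E(\Phi,I)$, with $\xi\in I$, satisfies $\rho_I\bigl(x_{\alpha}(\xi)\bigr)=x_{\alpha}(0)=1$, so it lies in the principal congruence subgroup $G(\Phi,R,I)=\ker\rho_I$. Since $G(\Phi,R,I)\le C(\Phi,R,I)$ by the definition of the full congruence subgroup, we get $E(\Phi,I)\le G(\Phi,R,I)\le C(\Phi,R,I)$. In particular $E(\Phi,I)$ is a subgroup of $C(\Phi,R,I)$, so that the very assertion ``$E(\Phi,I)$ is normal in $C(\Phi,R,I)$'' makes sense.

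\textbf{Step 2: the relevant commutator is small.} Setting $J=I$ in Theorem~\ref{The:2} gives
$$[E(\Phi,I),C(\Phi,R,I)]=[E(\Phi,I),E(\Phi,I)].$$
The right-hand side is generated by commutators of pairs of elements of the group $E(\Phi,I)$, hence is contained in $E(\Phi,I)$. Thus $[E(\Phi,I),C(\Phi,R,I)]\le E(\Phi,I)$.

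\textbf{Step 3: normality.} For $c\in C(\Phi,R,I)$ and $e\in E(\Phi,I)$ one has the identity $c\,e\,c^{-1}=[c,e]\,e$, and $[c,e]\in[C(\Phi,R,I),E(\Phi,I)]=[E(\Phi,I),C(\Phi,R,I)]\le E(\Phi,I)$ by Step 2; therefore $c\,e\,c^{-1}\in E(\Phi,I)$. Together with Step 1 this shows that the subgroup $E(\Phi,I)$ of $C(\Phi,R,I)$ is carried into itself under conjugation by every element of $C(\Phi,R,I)$, i.e. it is normal in $C(\Phi,R,I)$. I do not expect any genuine obstacle at this stage: all the difficulty has been discharged into Theorem~\ref{The:1} (and, through Theorem~\ref{The:2}, into Theorem~\ref{Oldie:4}), whose proofs — including the role of the umbrella assumption~(*) — carry the real weight of the paper. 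The only points that need a second of care are the commutator convention used in Step 3 and the trivial but essential observation that $[E(\Phi,I),E(\Phi,I)]\le E(\Phi,I)$ simply because $E(\Phi,I)$ is a subgroup by construction.
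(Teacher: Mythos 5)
Your proposal is correct and follows the paper's own route: Theorem~\ref{The:3} is obtained precisely by setting $J=I$ in Theorem~\ref{The:2} and noting that $[E(\Phi,I),C(\Phi,R,I)]=[E(\Phi,I),E(\Phi,I)]\le E(\Phi,I)$ forces normality. The extra details you spell out (the containment $E(\Phi,I)\le C(\Phi,R,I)$ and the commutator identity $cec^{-1}=[c,e]e$) are exactly the routine verifications the paper leaves implicit.
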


For the special case of $G=\GL(n,R)$ the above Theorems~\ref{The:1} 
and \ref{The:2} were first verified by the first named author in 
\cite{NV18}, while Theorem~\ref{The:3} in that case was proven 
already in \cite{Nica}. However, in \cite{NV18} the proof 
proceeded differently. First, Theorem~\ref{The:2} was derived from
Theorems~\ref{Oldie:1} and~\ref{Oldie:2} by the same birelative
version of decomposition of unipotents 
\cite{Stepanov_Vavilov_decomposition} that was already used in 
\cite{NVAS} to establish the respective special case of 
Theorem~\ref{Oldie:4}.
Then, Theorem~\ref{The:1} was derived as a corollary of 
Theorems~\ref{The:2} and~\ref{Oldie:4}. Thereupon, the second 
author immediately 
suggested that per-case one could achieve the same directly, 
by looking at the elementary generators in \cite{RNZ-generation}, 
Theorem~1.3. This is exactly what we accomplish in the present paper.
Technically, the proofs are not ticklish, the main difficulty was to 
convince ourselves that Theorems~\ref{The:1}--\ref{The:3} can 
be true as stated!

\par
The paper is organised as follows. In \S~\ref{sec:old} we recall
notation and some background facts that will be used in our 
proofs. Also, there we recall Theorem 1.3 of 
\cite{RNZ-generation} and reduce the proof of Theorem~\ref{The:1} 
to a calculation in groups of rank 2. The technical core of the paper 
is \S~\ref{sec:proof}, where we consecutively verify our Main 
Lemma for types $\A_2$, $\C_2$ (which is the most difficult case), 
and $\G_2$. After that in \S~\ref{sec:long} we establish another 
related result, generation of $E(\Phi,R,I)$ by long root elements.
Finally, in \S~\ref{sec:final} we mention some further
related results and applications.


\section{Notation and preliminary facts}\label{sec:old}

To make this paper independent of \cite{RNZ-generation} here we 
recall basic notation and the requisite facts, which will be used in 
our proofs. For more background information on Chevalley groups 
over rings, see \cite{NV91,vavplot,RN1} and references therein.

\subsection{Notation}
Let $G$ be a group. For any $x,y\in G$,  ${}^xy=xyx^{-1}$  denotes
the left $x$-conjugate of $y$. As usual, $[x,y]=xyx^{-1}y^{-1}$
denotes the [left normed] commutator of $x$ and $y$. We shall make
constant use of the obvious commutator identities, such as $[x,yz]=[x,y]\cdot{}^y[x,z]$ or $[xy,z]={}^x[y,z]\cdot[x,z]$,
usually without any specific reference.

As in the introduction, we denote by $x_{\a}(\xi)$, $\a\in\Phi$, $\xi\in R$, 
the elementary generators of the (absolute) elementary Chevalley subgroup 
$E(\Phi,R)$. For a root $\a\in\Phi$ we denote by $X_{\a}$ the
corresponding [elementary] root subgroup 
$X_{\a}=\big\{x_{\a}(\xi)\mid\xi\in R\big\}$. Recall that any conjugate
${}^gx_{\a}(\xi)$ of an elementary root unipotent, where $g\in G(\Phi,R)$
is called {\it root element\/} or {\it root unipotent\/}, it is called {\it long\/} 
or {\it short\/}, depending on whether the root $\a$ itself is long or short.

Let, as in the introduction, $I$ be an ideal of $R$, We denote by
$X_{\a}(I)$ the intersection of $X_{\a}$ with the principal congruence 
subgroup $G(\Phi,R,I)$. Clearly, $X_{\a}(I)$ consists of all elementary 
root elements $x_{\a}(\xi)$, $\a\in\Phi$, $\xi\in I$, of level $I$:
$$ X_{\a}(I)=\big\{x_{\a}(\xi)\mid\xi\in I\big\}. $$
\noindent 
By definition, $E(\Phi,I)$ is generated by $X_{\a}(I)$, for all roots 
$\a\in\Phi$. The same subgroups generate $E(\Phi,R,I)$ as a normal
subgroup of the absolute elementary group $E(\Phi,R)$.
Generators of $E(\Phi,R,I)$ {\it as a group\/} are recalled in the next 
subsection.

All results of the present paper are based on the Steinberg 
relations among the elementary generators, which will be repeatedly 
used without any specific reference. Especially important for us is
the Chevalley commutator formula 
$$ [x_{\a}(\xi),x_{\beta}(\zeta)]=\prod_{i\a+j\beta\in \Phi}
x_{i\a+j\beta}(N_{\a\beta ij}\xi^i\zeta^j), $$
\noindent
where $\a\not=-\beta$ and $N_{\a\beta ij}$ are the structure constants
which do not depend on $\xi$ and $\zeta$. However, for $\Phi=\G_2$
they may depend on the order of the roots in the product on the right
hand side. See \cite{carter,stein2,steinberg,vavplot} for more
details regarding the structure constants $N_{\a\beta ij}$.

\subsection{Generation of mixed commutator subgroups}
We shall extensively use the  two following generation theorems. The
first one is a classical result by Michael Stein \cite{stein2}, Jacques Tits 
\cite{tits} and Leonid Vaserstein \cite{vaser86}. The second one is the
main result of \cite{RNZ3}, Theorem~1.3.

\begin{Oldie}\label{Oldie:5}
Let $\Phi$ be a reduced irreducible root system of rank $\ge 2$
and let $I$ be an ideal of a commutative ring $R$. Then
as a group\/ $E(\Phi,R,I)$ is generated by the elements of
the form
$$ z_{\alpha}(\xi,\eta)=
x_{-\alpha}(\eta)x_{\alpha}(\xi)x_{-\alpha}(-\eta), $$
\noindent
where\/ $\xi\in I$, $\eta\in R$, and $\alpha\in\Phi$.
\end{Oldie}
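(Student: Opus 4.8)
The plan is to prove the non-trivial inclusion: every generator $z_{\alpha}(\xi,\eta)$ with $\xi\in I$, $\eta\in R$, $\alpha\in\Phi$ lies in $E(\Phi,R,I)$, and conversely that the $z_{\alpha}(\xi,\eta)$ generate all of $E(\Phi,R,I)$ as a group. The easy direction is that each $z_{\alpha}(\xi,\eta)$ belongs to $E(\Phi,R,I)$: by definition $z_{\alpha}(\xi,\eta)={}^{x_{-\alpha}(\eta)}x_{\alpha}(\xi)$ is an $E(\Phi,R)$-conjugate of the level-$I$ generator $x_{\alpha}(\xi)\in E(\Phi,I)$, hence lies in the normal closure $E(\Phi,R,I)=E(\Phi,I)^{E(\Phi,R)}$. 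The content is the reverse inclusion, for which it suffices to show that the subgroup $F$ generated by all $z_{\alpha}(\xi,\eta)$ is (i) normalised by $E(\Phi,R)$, and (ii) contains $E(\Phi,I)$; since $E(\Phi,I)\le E(\Phi,R,I)$ and $E(\Phi,R,I)$ is the smallest $E(\Phi,R)$-normal subgroup containing $E(\Phi,I)$, (i) and (ii) together force $F=E(\Phi,R,I)$.

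For (ii), note $z_{\alpha}(\xi,0)=x_{\alpha}(\xi)$, so all the standard level-$I$ generators $x_{\alpha}(\xi)$, $\xi\in I$, already occur among the $z_{\alpha}$; hence $E(\Phi,I)\le F$ is immediate. The real work is (i): one must check that for each elementary generator $x_{\beta}(\tau)$, $\beta\in\Phi$, $\tau\in R$, the conjugate ${}^{x_{\beta}(\tau)}z_{\alpha}(\xi,\eta)$ again lies in $F$. First I would handle the case $\beta=-\alpha$: here ${}^{x_{-\alpha}(\tau)}z_{\alpha}(\xi,\eta)=z_{\alpha}(\xi,\eta+\tau)$ directly from the definition, so this is clear. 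The case $\beta=\alpha$ is reduced, via the commutator identity ${}^{x_{\alpha}(\tau)}z_{\alpha}(\xi,\eta)=[x_{\alpha}(\tau),z_{\alpha}(\xi,\eta)]\cdot z_{\alpha}(\xi,\eta)$, to showing the commutator $[x_{\alpha}(\tau),z_{\alpha}(\xi,\eta)]$ lies in $F$; expanding $z_{\alpha}(\xi,\eta)$ and using the $\SL_2$-type relations in the rank-one subgroup generated by $X_{\pm\alpha}$, together with the commutator-of-level-$I$ elements being in $F$, one expresses this in terms of $z$'s and $x$'s of level $I$.

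The main obstacle is the generic case $\beta\ne\pm\alpha$. Here I would use the Chevalley commutator formula to move $x_{\beta}(\tau)$ past the factors $x_{-\alpha}(\eta)$, $x_{\alpha}(\xi)$, $x_{-\alpha}(-\eta)$ of $z_{\alpha}(\xi,\eta)$, at the cost of extra root unipotents $x_{\gamma}(N\tau^i\eta^j)$ and $x_{\gamma}(N\tau^i\xi^j\eta^k)$ for various $\gamma=i\beta+j\alpha$ (or $i\beta+j\alpha+k(-\alpha)$). The ones involving a positive power of $\xi\in I$ are level-$I$ elementary generators, hence in $F$; the others must be reorganised, using the fact (standard for $\rk\Phi\ge 2$) that every root lies in a rank-$2$ subsystem together with a suitable companion root, so that such a stray $x_{\gamma}(\tau')$ can be rewritten as a product of commutators $[x_{\delta}(\cdot),x_{\gamma-\delta}(\cdot)]$ in which at least one argument can be taken in $I$ — landing again in $F$. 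This rank-$2$ localisation of the computation is precisely where the hypothesis $\rk\Phi\ge 2$ is used and is the technically delicate point; I would organise it by first disposing of the simply-laced case, where all structure constants are $\pm1$ and no squares of arguments intervene, and then treating the multiply-laced systems $\B_l,\C_l,\F_4,\G_2$ separately, invoking the umbrella assumption~(*) exactly where a term $N\tau^2\xi$ or $N\tau\xi^2$ must be absorbed. Once (i) is established the theorem follows.
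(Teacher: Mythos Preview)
First, note that the paper does \emph{not} prove this statement: it is quoted as a classical result of Stein, Tits, and Vaserstein, with references \cite{stein2,tits,vaser86}, and no argument is supplied. So there is nothing to compare against; I can only comment on the soundness of your outline.

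Your overall strategy (define $F=\langle z_\alpha(\xi,\eta)\rangle$, check $E(\Phi,I)\le F$, then check $F$ is $E(\Phi,R)$-normal) is exactly right, and the case $\beta=-\alpha$ is correct. The genuine problems are in the other two cases.

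For $\beta\neq\pm\alpha$ your analysis goes astray. You worry about ``stray'' factors $x_\gamma(N\tau^i\eta^j)$ not involving $\xi$, and propose to rewrite each one as a commutator with one argument in $I$. But such a factor carries no $I$-information whatsoever, so there is no way to manufacture an $I$-argument out of it; that plan cannot work. The correct manoeuvre is not to push $x_\beta(\tau)$ through the three factors of $z_\alpha$ separately, but to write
\[
{}^{x_\beta(\tau)}z_\alpha(\xi,\eta)={}^{x_{-\alpha}(\eta)}\bigl({}^{w}x_\alpha(\xi)\bigr),
\qquad w={}^{x_{-\alpha}(-\eta)}x_\beta(\tau).
\]
Since $\beta\neq\pm\alpha$, every root occurring in $w$ has the form $j\beta-i\alpha$ with $j\ge 1$, hence is $\neq-\alpha$; an easy induction on the $\beta$-coefficient then shows ${}^{w}x_\alpha(\xi)\in E(\Phi,I)$. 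Finally, conjugating a level-$I$ root element $x_\delta(\zeta)$ by $x_{-\alpha}(\eta)$ gives either $z_\alpha(\zeta,\eta)$ (if $\delta=\alpha$) or again an element of $E(\Phi,I)$ (if $\delta\neq\alpha$), so the whole expression lands in $F$. No reorganisation of non-$I$ terms is needed, and this step does not use $\rk\Phi\ge 2$.

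Conversely, you underestimate the case $\beta=\alpha$. Here ${}^{x_\alpha(\tau)}z_\alpha(\xi,\eta)={}^{x_\alpha(\tau)x_{-\alpha}(\eta)}x_\alpha(\xi)$ lives entirely in the rank-one subgroup $\langle X_\alpha,X_{-\alpha}\rangle$, and the ``$\SL_2$-type relations'' alone are not enough: in $\SL_2$ the analogue of $F$ is genuinely smaller than $E(\Phi,R,I)$. This is precisely where $\rk\Phi\ge 2$ enters: one writes $x_\alpha(\tau)$ as a product of root elements $x_\gamma(\sigma)$ with $\gamma\neq\pm\alpha$ (using $\alpha=\mu+\nu$ for suitable $\mu,\nu\in\Phi$), thereby reducing to iterated applications of the cases already handled.

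Finally, the umbrella assumption~(*) plays no role in this theorem; it is needed only for the level computations underlying Theorems~\ref{Oldie:1}--\ref{Oldie:3} and~\ref{Oldie:6}, not for the Stein--Tits--Vaserstein generation result.
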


\begin{Oldie}\label{Oldie:6}
Let $\Phi$ be a reduced irreducible root system of rank $\ge 2$.
In the cases\/ $\Phi=\C_2,\G_2$ assume that $R$ does not have residue
fields\/ ${\Bbb F}_{\!2}$ of\/ $2$ elements and in the
case\/ $\Phi=\C_l$, $l\ge 2$, assume additionally that any\/
$\theta\in R$ is contained in the ideal\/ $\theta^2R+2\theta R$.
\par
Further, let $I$ and $J$ be two ideals of a commutative ring $R$.
Then the mixed commutator subgroup
$\big[E(\Phi,R,I),E(\Phi,R,J)\big]$ is generated as a group by
the elements of the form
\par\smallskip
$\bullet$ $\big[x_{\alpha}(\xi),z_{\alpha}(\zeta,\eta)\big]$,
\par\smallskip
$\bullet$ $\big[x_{\alpha}(\xi),x_{-\alpha}(\zeta)\big]$,
\par\smallskip
$\bullet$ $z_{\alpha}(\xi\zeta,\eta)$,
\par\smallskip\noindent
where in all cases $\alpha\in\Phi$, $\xi\in I$, $\zeta\in J$,
$\eta\in R$.
\end{Oldie}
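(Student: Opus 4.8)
The plan is to prove the two inclusions $D\le L$ and $L\le D$ separately, where $L=[E(\Phi,R,I),E(\Phi,R,J)]$ and $D$ denotes the subgroup of $E(\Phi,R)$ generated by the three displayed families. The inclusion $D\le L$ is the routine half. For the first family I use $x_{\alpha}(\xi)\in E(\Phi,I)\le E(\Phi,R,I)$ and $z_{\alpha}(\zeta,\eta)\in E(\Phi,R,J)$, the latter by Theorem~\ref{Oldie:5}, so that $[x_{\alpha}(\xi),z_{\alpha}(\zeta,\eta)]\in L$; the second family is identical, with $x_{\alpha}(\xi)\in E(\Phi,R,I)$ and $x_{-\alpha}(\zeta)\in E(\Phi,R,J)$. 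For the third family I first note, applying Theorem~\ref{Oldie:5} to the ideal $IJ$ together with $z_{\gamma}(\omega_{1}+\omega_{2},\eta)=z_{\gamma}(\omega_{1},\eta)z_{\gamma}(\omega_{2},\eta)$, that the elements $z_{\alpha}(\xi\zeta,\eta)$ already generate $E(\Phi,R,IJ)$; the containment $E(\Phi,R,IJ)\le L$ is then verified by a direct Chevalley-commutator calculation, realising the generating root elements $x_{\gamma}(\xi\zeta)$ (whenever $\gamma=\delta+\varepsilon$ for roots $\delta,\varepsilon$) as commutators $[x_{\delta}(\xi),x_{\varepsilon}(\zeta)]$ up to higher level-$IJ$ corrections. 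This last step is immediate for simply-laced systems and already draws on the umbrella hypothesis in the cases $\C_{2},\G_{2}$.

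For the reverse inclusion, recall that $E(\Phi,R,I)$ and $E(\Phi,R,J)$, being relative elementary subgroups, are normal in $E(\Phi,R)$, so that $L$ is normal as well. By Theorem~\ref{Oldie:5} each factor is generated as a group by its $z$-elements, and, peeling off generators with the identities $[uv,w]={}^{u}[v,w]\,[u,w]$ and $[u,vw]=[u,v]\,{}^{v}[u,w]$, the subgroup $L$ is generated as a group by the $E(\Phi,R)$-conjugates of the basic commutators $[z_{\alpha}(\xi,\eta),z_{\beta}(\zeta,\theta)]$. It therefore suffices to establish two claims: (A) that $D$ is itself normal in $E(\Phi,R)$, so that the conjugations above may be absorbed into $D$; and (B) that every basic commutator $[z_{\alpha}(\xi,\eta),z_{\beta}(\zeta,\theta)]$ already lies in $D$.

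Both (A) and (B) localise to a subsystem of rank at most two. In (B) the elements $z_{\alpha}(\xi,\eta)$ and $z_{\beta}(\zeta,\theta)$ lie in the Chevalley subgroup attached to the closed subsystem $\Phi\cap(\mathbb{Z}\alpha+\mathbb{Z}\beta)$, so their commutator does too; by the standard reduction to subsystems it is therefore enough to prove (B) when $\Phi$ itself has rank two, i.e.\ for $\Phi=\A_{1}+\A_{1},\A_{2},\C_{2},\G_{2}$. Writing $z_{\alpha}(\xi,\eta)={}^{x_{-\alpha}(\eta)}x_{\alpha}(\xi)$ and expanding, I would split according to the relative position of $\alpha$ and $\beta$: for $\alpha=\pm\beta$ the commutator lies in the rank-one subgroup of $\alpha$ and unwinds, through the relative commutator calculus for the corresponding $\operatorname{SL}_{2}$-subgroup, into the first and second families together with level-$IJ$ elements; for $\A_{1}+\A_{1}$ the two rank-one subgroups commute and the commutator is trivial; and otherwise every cross term produced by the Chevalley formula is a root element $x_{i\alpha+j\beta}(N_{\alpha\beta ij}\,\xi^{i}\zeta^{j})$ with $i,j\ge1$, of level $I^{i}J^{j}\subseteq IJ$, hence lies in $E(\Phi,R,IJ)\le D$ through the third family. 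Claim (A) is of the same nature: type-(c) generators stay inside the normal subgroup $E(\Phi,R,IJ)$, while the $x_{\gamma}(\tau)$-conjugates of the first two families involve only the rank-$\le2$ subsystem spanned by $\alpha$ and $\gamma$ and expand, by the Chevalley formula, into products of the three allowed types.

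The hard part will be the rank-two bookkeeping in the multiply-laced cases $\C_{2}$ (the most delicate) and $\G_{2}$, which is genuinely computational rather than formal. There the structure constants $N_{\alpha\beta ij}$ are divisible by $2$ (and, for $\G_{2}$, by $3$) and the cross terms carry $\xi$ or $\zeta$ to the second or third power, so that matching every such term to a legitimate third-family generator and disposing of the spurious factors of $2$ is exactly where the umbrella hypothesis---absence of a residue field $\GF{2}$, and $\theta\in\theta^{2}R+2\theta R$---is consumed. I expect the case-by-case verification for $\A_{2}$, $\C_{2}$ and $\G_{2}$ to carry essentially all the weight of the argument, the reduction to rank two and the normality claim (A) being formal once these rank-two identities are in hand.
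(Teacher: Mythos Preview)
The paper does not prove this statement. Theorem~\ref{Oldie:6} is quoted verbatim as the main result of the authors' earlier paper \cite{RNZ-generation} (Theorem~1.3 there); in the present paper it is used as a black box to reduce Theorem~\ref{The:1} to the Main Lemma. There is therefore no ``paper's own proof'' of Theorem~\ref{Oldie:6} to compare your proposal against.

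That said, your outline is broadly the right shape for such a proof and matches the architecture of \cite{RNZ-generation}: easy inclusion $D\le L$, then normality of $D$ and membership of the basic commutators $[z_{\alpha}(\xi,\eta),z_{\beta}(\zeta,\theta)]$, both reduced to rank~$\le 2$ subsystems. One point in your sketch is understated, however. In the ``otherwise'' case of claim~(B) you write that ``every cross term produced by the Chevalley formula is a root element $x_{i\alpha+j\beta}(N_{\alpha\beta ij}\xi^{i}\zeta^{j})$ of level $IJ$''. That is true for $[x_{\alpha}(\xi),x_{\beta}(\zeta)]$, but the object you must analyse is $[z_{\alpha}(\xi,\eta),z_{\beta}(\zeta,\theta)]$, which carries the conjugating factors $x_{-\alpha}(\eta)$ and $x_{-\beta}(\theta)$ with parameters in $R$, not in $I$ or $J$. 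When you unwind those conjugations, cross terms such as $x_{i(-\alpha)+j\beta}(\cdots)$ appear with level merely $J$ (or $R$), not $IJ$, and must be pushed back into the three generator families by further commutator manipulations. This is exactly the substantial rank-two bookkeeping you allude to at the end, and in \cite{RNZ-generation} it occupies the bulk of the argument; your sentence as written does not yet account for it.
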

Now, the generators of second type belong to $[E(\Phi,I),E(\Phi,J)]$
by the very definition. Generators of the third type belong to 
$[E(\Phi,I),E(\Phi,J)]$ by Theorem~\ref{Oldie:1}. Thus, 
Theorem~\ref{Oldie:6} implies that to prove Theorem~\ref{The:1}
it suffices to establish the following result.
\begin{Mainlem}
Let $\Phi$ be a reduced irreducible root system of rank $\ge 2$
and let $I,J$ be two ideals of a commutative ring $R$. Then
$$ \big[x_{\alpha}(\xi),z_{\alpha}(\zeta,\eta)\big]\in [E(\Phi,I),E(\Phi,J)], $$
\noindent
for all
$\alpha\in\Phi$, $\xi\in I$, $\zeta\in J$, $\eta\in R$.
\end{Mainlem}
Obviously, the proof of the Main Lemma immediately reduces to 
rank 2 systems. Thus, we only have to verify it for groups of types 
$\A_2$, $\B_2$ and $\G_2$. For $\Phi=\A_2$ we reproduce an
authentic calculation at the level of individual elementary generators,
with actual signs (which in this case is an adaptation of an argument 
from \cite{NV18}). We could do the same also for $\Phi=\C_2,\G_2$,
and this was, as a matter of fact, how we originally verified it.
However, to make the text more readable, we prefer the following
shortcut. Since we already know Theorems~\ref{Oldie:1} and 
\ref{Oldie:2}, we can perform calculations modulo the subgroups
$E(n,R,IJ)$ and $[E(n,I),E(n,J)]$. In turn, in many cases the easiest 
way to verify that some commutators fall into these subgroups, 
is Levi decomposition, which we now recall, in a slightly more
precise form, than the one used in \cite{RNZ-generation}.

\subsection{Parabolic subgroups}
An important part in the proof of Main Lemma for $\Phi=\C_2,\G_2$,
is played by Levi decomposition for [elementary] parabolic subgroups.
Classically, it asserts that any parabolic subgroup $P$ of
$G(\Phi,R)$ can be expressed as the semi-direct product
$P=L_P\rightthreetimes U_P$ of its unipotent radical
$U\unlhd P$ and a Levi subgroup $L_P\le P$. However, as in
\cite{RNZ-generation} we do not have to recall the general case. 
\par\smallskip
$\bullet$ Since we calculate inside $E(n,R)$, we can limit
ourselves to the {\it elementary\/} parabolic subgroups,
spanned by some root subgroups $X_{\a}$.
\par\smallskip
$\bullet$ Since we can choose the order on $\Phi$ arbitrarily,
we can always assume that $\a$ is fundamental and, thus, limit 
ourselves to {\it standard\/} parabolic subgroups.
\par\smallskip
$\bullet$ Since the proof of Main Lemma reduces to groups
of rank 2, we could only consider rank 1 parabolic subgroups,
which {\it in this case\/} are maximal parabolic subgroups.
\par\smallskip
Thus, we consider only {\it elementary\/} rank 1 parabolics, which 
are defined as follows. Namely, we fix an order on $\Phi$, and
let $\Phi^+$ and $\Phi^-$ be the corresponding sets of positive 
and negative roots, respectively. Further, let $\Pi=\{\a_1,\ldots,\a_l\}$
be the corresponding fundamental system. For any $r$, $1\le r\le l$, 
and define the $r$-th rank 1 {\it elementary\/} parabolic subgroup as
$$ P_{\a_r}=\langle U,X_{\a_r}\rangle\le E(\Phi,R). $$
\noindent
Here $U= \prod X_{\a}$, $\a\in\Phi^+$, is the unipotent radical of
the standard Borel subgroup $B$. Then the unipotent radical of $P_{\a_r}$
has the form
$$ U_{\a_r}=\prod X_{\a},\quad \a\in\Phi^+,\  \a\neq\a_r, $$
\noindent
whereas $L_{\a_r}=\langle X_{\a_r},X_{-\a_r}\rangle$ is the [standard] 
Levi subgroup of $P_r$. Clearly, $L_{\a_r}$ is isomorphic to the elementary 
subgroup $E(2,R)$ in $\SL(2,R)$, or to its projectivised version 
$\PE(2,R)$ in $\PGL(2,R)$. In the sequel we usually (but not always!)
abbreviate $P_{\a_r},U_{\a_r},L_{\a_r}$, etc., to $P_r,U_r,L_r$, etc.
\par
Levi decomposition (which in the case of elementary parabolics 
immediately follows from
the Chevalley commutator formula) asserts that the group $P_r$ 
is the semi-direct product $P_r=L_r\rightthreetimes U_r$ of 
$U_r\unlhd P_r$ and $L_r\le P_r$. The most important part is the 
[obvious] claim is that $U_r$ is normal in $P_r$.
\par
Simultaneously with $P_r$ one considers also the opposite parabolic 
subgroup $P_r^-$ defined as
$$ P_r=\langle U^-,X_{-\a_r}\rangle\le E(\Phi,R). $$
\noindent
Here $U^-= \prod X_{\a}$, $\a\in\Phi^-$, is the unipotent radical of
the Borel subgroup $B^-$ opposite to the standard one. Clearly,
$P_r$ and $P_r^-$ share the common [standard] Levi subgroup $L_r$, 
whereas the unipotent radical $U_r^-$ of $P_r^-$ is opposite to that 
of $P_r$, and has the form
$$ U_r^-=\prod X_{\a},\quad \a\in\Phi^-,\  \a\neq-\a_r. $$
\noindent
Now, Levi decomposition takes the form
$P_r^-=L_r\rightthreetimes U_r^-$ with $U_r^-\unlhd P_r^-$. In other
words, $U_r$ and $U_r^-$  are both normalised by $L_r$. 

Actually, we need a slightly more precise form of this last statement.
Namely, let $I$ be an ideal of $R$. Denote by $L_r(I)$ the principal
congruence subgroup of level $I$ in $L_r$ and by $U_r(I)$ and 
$U_r^-(I)$ the respective intersections of $U_r$ and $U_r^-$ with
$G(\Phi,R,I)$ --- or, what is the same, with $E(\Phi,R,I)$:
$$ U_r(I)=U_r\cap E(\Phi,R,I),\qquad U_r^-(I)=U_r^-\cap E(\Phi,R,I). $$
\noindent
Obviously, $U_r(I),U_r^-(I)\le E(\Phi,I)$ are normalised by $L_r$. The
following fact will be repeatedly used in the proof of the Main Lemma.
\begin{Lemma}
Let $I$ and $J$ be two ideals of $R$. Then 
$$ [L_r(I),U_r(J)]\le U_r(IJ),\qquad [L_r(I),U_r^-(J)]\le U_r^-(IJ). $$
\noindent
In particular, both commutators are contained in $E(\Phi,IJ)\le E(\Phi,R,IJ)$.
\end{Lemma}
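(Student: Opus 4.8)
\medskip
\noindent\textit{Proof proposal.} The plan is to combine Levi decomposition with the elementary observation that the mutual commutator of two principal congruence subgroups of levels $I$ and $J$ sits at level $IJ$. First I would reduce the two displayed inclusions to one. Indeed, $G(\Phi,R)$ carries an automorphism, defined over $\mathbb Z$, which interchanges the root subgroups $X_{\alpha}$ and $X_{-\alpha}$ (equivalently, one may simply reverse the chosen order on $\Phi$, so that the new $P_r$ is the old $P_r^-$); being defined over $\mathbb Z$ it preserves every congruence subgroup, it fixes $L_r$ and it carries $U_r$ onto $U_r^-$. Applying it to the first inclusion yields the second, so it suffices to prove $[L_r(I),U_r(J)]\le U_r(IJ)$.

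Next, by Levi decomposition $U_r\unlhd P_r=L_r\rightthreetimes U_r$, hence $[L_r,U_r]\le U_r$, and in particular $[L_r(I),U_r(J)]\le U_r$. The remaining point is that $[L_r(I),U_r(J))]$ lies in the principal congruence subgroup $G(\Phi,R,IJ)$. Since $L_r(I)\le G(\Phi,R,I)$ and $U_r(J)\le G(\Phi,R,J)$ by definition, this is the special case, applied to these two subgroups, of the general containment $[G(\Phi,R,I),G(\Phi,R,J)]\le G(\Phi,R,IJ)$. I would prove the latter in a fixed faithful representation $G(\Phi,-)\hookrightarrow\GL_N$, a closed immersion of group schemes over $\mathbb Z$ and hence compatible with reduction modulo any ideal: writing $g=1+X$ with the entries of $X$ in $I$ and $h=1+Y$ with the entries of $Y$ in $J$, one has $gh-hg=XY-YX$ with entries in $IJ$, so $gh\equiv hg$ and therefore $[g,h]=(gh)(hg)^{-1}\equiv 1$ modulo $IJ$.

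Combining the two steps, $[L_r(I),U_r(J)]\le U_r\cap G(\Phi,R,IJ)$, which by the identification recorded just before the statement equals $U_r\cap E(\Phi,R,IJ)=U_r(IJ)$; the case of $U_r^-$ follows by the symmetry of the first paragraph. For the closing assertion I would invoke the standard description of unipotent elements at a given level: an element of $U_r$ lying in $G(\Phi,R,IJ)$ is a product of root unipotents $x_{\alpha}(\xi)$ with $\alpha\in\Phi^+$, $\alpha\neq\alpha_r$ and $\xi\in IJ$ --- one peels off the topmost root and inducts via the Chevalley commutator formula, this being trivial when $U_r$ is abelian, as for $\Phi=\A_2$ --- whence $U_r(IJ)\le E(\Phi,IJ)\le E(\Phi,R,IJ)$, and likewise for $U_r^-(IJ)$. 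I do not expect a genuine obstacle here: the only ingredient that is not purely formal is this last description of level-$IJ$ unipotent elements, which is routine, and otherwise the content is just the bookkeeping of levels in the two displayed estimates.
\medskip
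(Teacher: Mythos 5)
Your proposal is correct. The paper states this Lemma without proof, treating it as an immediate consequence of Levi decomposition together with standard level bookkeeping, and your write-up is essentially that intended argument spelled out: containment in $U_r$ (resp.\ $U_r^-$) from normality of the unipotent radical under the Levi subgroup, level $IJ$ from the standard containment $[G(\Phi,R,I),G(\Phi,R,J)]\le G(\Phi,R,IJ)$ checked in a faithful representation over $\mathbb{Z}$, and the identification $U_r\cap G(\Phi,R,IJ)=U_r(IJ)\le E(\Phi,IJ)$ via uniqueness of expression in the unipotent radical, which the paper itself already records as obvious just before the Lemma.
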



\section{Proof of Main Lemma}\label{sec:proof}

In this section we prove the Main Lemma --- and thus also 
Theorems~\ref{The:1}--\ref{The:3}. Let, as above, 
$x=[x_{\a}(\xi),z_{\a}(\zeta,\eta)]$, where $\xi\in I$, $\zeta\in J$, 
$\eta\in R$. We divide the proof into four cases: 
\par\smallskip
i) $\a$ can be embedded in a root subsystem of type $\A_2$.
This proves the Main Lemma for simply laced Chevalley 
groups, and for the Chevalley group of type $\F_4$. It also proves 
the inclusion in the Main Lemma for {\it short\/} roots in Chevalley 
groups of type $\C_l$, $l\ge 3$, for {\it long\/} roots in Chevalley 
groups of type $\B_l$, $l\ge 3$, and for {\it long\/} roots in the 
Chevalley group of type $\G_2$.
\par\smallskip
ii) $\a$ can be embedded in a root subsystem of type $\C_2$ 
as a {\it long\/} root. This proves the Main Lemma for Chevalley 
groups of type $\C_l$, $l\ge 3$. 
\par\smallskip
iii) $\a$ can be embedded in a root subsystem of type $\C_2$
as a {\it short\/} root. This proves the Main Lemma for Chevalley 
groups of type $\B_l$, $l\ge 3$, and finishes the proof for the
case $\C_2$. 
\par\smallskip
iv) $\a$ can be embedded in a root subsystem of type $\G_2$ 
as a {\it short\/} root. This proves the Main Lemma for the last
remaining case, the group of type $\G_2$. 
\par\smallskip
For the first case, we reproduce an actual computation at the level 
of root elements that ultimately could be refined to an explicit formula
expressing $x=[x_{\a}(\xi),z_{\a}(\zeta,\eta)]$ as a product of 
conjugates of the commutators of the form 
$[x_{\g}(\epsilon),x_{\delta}(\theta)]$, for some roots 
$\gamma,\delta\in\Phi$ and some $\epsilon\in I$, $\theta\in J$.
This argument is a transcript of the initial argument from \cite{NV18}, 
which corresponds to the the first --- difficult! --- item in the proof of 
\cite{NV18}, Theorem~1. 
\par\smallskip
$\bullet$ 
First, assume that $\alpha$ can be embedded in a root system of type 
$\A_2$. 
We wish to prove that $[x_{\a}(\xi),z_{\a}(\zeta,\eta)]\in [E(\A_2,I),E(\A_2,J)]$. 
Indeed, in this case there exist roots $\b,\g\in\Phi$, of the same length 
as $\a$ such that $\a=\b+\g$ and $N_{\b\g11}=1$.
Then
$$ x=[x_{\a}(\xi),z_{\a}(\zeta,\eta)]=
x_{\a}(\xi)\cdot{}^{z_{\a}(\zeta,\eta)}x_{\a}(-\xi)= 
x_{\a}(\xi)\cdot{}^{z_{\a}(\zeta,\eta)}[x_{\b}(1),x_{\g}(-\xi)]. $$
\noindent
Thus, 
\begin{multline*}
x=x_{\a}(\xi)\cdot[{}^{z_{\a}(\zeta,\eta)}x_{\b}(1),
{}^{z_{\a}(\zeta,\eta)}x_{\g}(-\xi)]=\\
x_{\a}(\xi)\cdot [x_{\b}(1-\zeta\eta)x_{-\g}(-\eta\zeta\eta),
x_{-\b}(-\xi\eta\zeta\eta)x_{\g}(-\xi(1-\eta\zeta))]=\\
x_{\a}(\xi)\cdot[x_{\b}(1)y,x_{\g}(-\xi)z],
\end{multline*}
\noindent
where
$$ y=x_{\b}(-\zeta\eta)x_{\g}(-\eta\zeta\eta)\in E(\A_2,J),\quad 
z=x_{-\b}(-\xi\eta\zeta\eta)x_{\g}(\xi\eta\zeta)\in E(\A_2,IJ). $$
\noindent
Since $x_{\g}(\xi)\in E(\A_2,I)$, the second factor of the above commutator
belongs to $E(\A_2,I)$. Thus,
$$ [x_{\b}(1)y,x_{\g}(-\xi)z]={}^{x_{\b}(1)}[y,x_{\g}(-\xi)z]
\cdot[x_{\b}(1),x_{\g}(-\xi)z]. $$
\par
Now the first commutator in the right hand side belongs to 
$[E(\A_2,I),E(\A_2,J)]$, which is normal in $E(\A_2,R)$, so that the
conjugation by $x_{\b}(1)$ still leaves us there.
\par
On the other hand, the second commutator equals
$$ [x_{\b}(1),x_{\g}(-\xi)]\cdot{}^{x_{\g}(-\xi)}[x_{\b}(1),z], $$
\noindent
The second commutator in the last expression belongs to $E(\A_2,R,IJ)$, 
and remains there after elementary conjugations, whereas the first 
commutator equals $x_{\a}(-\xi)$.
\par
Summarising the above, we see that 
$$ x\in x_{\a}(\xi)[E(\A_2,I),E(\A_2,J)]x_{\a}(-\xi)\cdot 
E(\A_2,R,IJ)\le [E(\A_2,I),E(\A_2,J)], $$
\noindent
as claimed.
\par\smallskip
For the three remaining cases, where $\Phi=\C_2$ or $\Phi=\G_2$, 
the idea of proof is similar but its
implementation requires more care, because of the more complicated 
form of the Chevalley commutator formula. In these cases too we 
{\it could\/} come up with explicit formulas, but to restrain the length,
we prefer to repeatedly invoke the above Lemma on unipotent radicals, 
and Theorems~\ref{Oldie:1}, \ref{Oldie:2}. In other words, all 
calculations are performed modulo $[E(\Phi,I),E(\Phi,J)]$, which is
already normal in $E(\Phi,R)$. At the moment we discover that a 
certain factor falls into $E(\Phi,R,IJ)$ or into $[E(\Phi,I),E(\Phi,J)]$
itself, we immediately loose interest to the explicit form of this factor.
\par
The argument proceeds as follows. When $\a$ is {\it short\/}, we express 
it in the form $\a=\b+\g$, where $\b$ is long and $\g$ is short. Similarly,
when $\a$ is {\it long\/}, we express it in the form $\a=\b+2\g$, with the
same $\b,\g$ as above. Since we are only looking at {\it one\/} instance
of the Chevalley commutator formula, the parametrisation of the
corresponding root subgroups can be chosen in such a way that all
the resulting structure constants are positive, so that the formula
takes the form
$$ [x_{\b}(\xi),x_{\g}(\theta)]=x_{\b+\g}(\xi\theta)x_{\b+2\g}(\xi\theta^2) $$
\noindent
in the case of $\Phi=\C_2$ and the form 
$$ [x_{\b}(\xi),x_{\g}(\theta)]=x_{\b+\g}(\xi\theta)x_{\b+2\g}(\xi\theta^2) 
x_{\b+3\g}(\xi\theta^3)x_{2\b+3\g}(2\xi^2\theta^3) $$
\noindent
in the case of $\Phi=\G_2$, see \cite{carter, steinberg} or \cite{vavplot}
and references there.
\par
As above, we rewrite $x$ as $x=x_{\a}(\xi)\cdot{}^{z_{\a}(\zeta,\eta)}x_{\a}(-\xi)$
and plug in the expression of $x_{\a}(-\xi)$ as the commutator
$[x_{\b}(\xi),x_{\g}(1)]^{-1}=[x_{\g}(1),x_{\b}(\xi)]$ times the tail consisting 
of the remaining factors $x_{\delta}(\eta)$ from the above instances
of the Chevalley commutator formula, which, up to sign are equal
to $\xi$ or $2\xi^2$ and in any case belong to $E(\Phi,I)$. By
the above Lemma the conjugates ${}^{z_{\a}(\zeta,\eta)}x_{\delta}(\eta)$
of the remaining factors are congruent to these factors themselves,
modulo $E(\Phi,R,IJ)$. As in the first case, this again leaves us with 
analysis of the commutator ${}^{z_{\a}(\zeta,\eta)}[x_{\g}(1),x_{\b}(\xi)]$,
slightly different between cases, due to disparate configurations of roots.
Anyway, in each case the result will be that modulo elementary 
conjugations and factors that cancel with $x_{\a}(\xi)$, or with the
outstanding factors coming from the Chevalley commutator formula,
the relevant part of the commutator falls into $[E(\Phi,I),E(\Phi,J)]$.
\par
Now we pass to the case by case analysis.
\par\smallskip
$\bullet$ First, assume that $\a$ can be embedded into $\C_2$ 
as a {\it long\/} root. In this case there exist a long root $\b$ and 
a short root $\g$ such that $\a=\b+2\g$ and choosing the signs
in the Chevalley commutator formula as above, we can write
$x_{\a}(-\xi)=[x_{\g}(1),x_{\b}(\xi)]x_{\b+\g}(\xi)$. Plugging this 
into the expression for $x$, we get
$$ x=x_{\a}(\xi)\cdot
{}^{z_{\a}(\zeta,\eta)}[x_{\g}(1),x_{\b}(\xi)]\cdot
{}^{z_{\a}(\zeta,\eta)}x_{\b+\g}(\xi). $$
\par
As we know from Lemma, 
$$ {}^{z_{\a}(\zeta,\eta)}x_{\b+\g}(\xi)\equiv x_{\b+\g}(\xi)
\pmod{E(\C_2,IJ)}, $$
\noindent
so that ${}^{z_{\a}(\zeta,\eta)}x_{\b+\g}(\xi)$ can be rewritten in the
form $x_{\b+\g}(\xi)z$, for some $E(\C_2,IJ)$.
\par
Next, we look at the second factor. Clearly,
$$ y={}^{z_{\a}(\zeta,\eta)}[x_{\g}(1),x_{\b}(\xi)]=
[{}^{z_{\a}(\zeta,\eta)}x_{\g}(1),{}^{z_{\a}(\zeta,\eta)}x_{\b}(\xi)]=
[{}^{z_{\a}(\zeta,\eta)}x_{\g}(1),x_{\b}(\xi)]. $$
\noindent
As we know from Lemma, ${}^{z_{\a}(\zeta,\eta)}x_{\g}(1)\equiv x_{\g}(1)
\pmod{E(\C_2,J)}$. Rewriting ${}^{z_{\a}(\zeta,\eta)}x_{\g}(1)$
in the form ${}^{z_{\a}(\zeta,\eta)}x_{\g}(1)=x_{\g}(1)w$, for some
$w\in E(\C_2,J)$, we get
$$ y= [x_{\g}(1)w,x_{\b}(\xi)]={}^{x_{\g}(1)}[w,x_{\b}(\xi)]
\cdot [x_{\g}(1),x_{\b}(\xi)], $$
\noindent
where the first commutator belongs to $[E(\C_2,I),E(\C_2,J)]$, 
and stays there after elementary conjugation. 
\par
Combining the above, and expanding $[x_{\g}(1),x_{\b}(\xi)]$ by
the Chevalley commutator formula, we see that
$$ x=x_{\a}(\xi)\cdot
{}^{x_{\g}(1)}[x_{\b}(\xi),w]\cdot
x_{\a}(-\xi)x_{\b+\g}(-\xi)\cdot 
x_{\b+\g}(\xi)z\in [E(\C_2,I),E(\C_2,J)], $$
\noindent
as claimed.
\par\smallskip
$\bullet$ Next, assume that $\a$ can be embedded in $\C_2$ as a 
{\it short\/} root. Choose $\beta$ and $\gamma$ such that $\a=\beta+\gamma$,
while $N_{\beta\gamma 11}=N_{\beta\gamma 12}=1$. Then, 
clearly, $x_{\a}(-\xi)$ can be expressed as 
$x_{\a}(-\xi)=[x_{\g}(1),x_{\b}(\xi)]x_{\b+2\g}(\xi)$. Thus,
$$ x=
x_{\a}(\xi)\cdot
{}^{z_{\a}(\zeta,\eta)}[x_{\g}(1),x_{\b}(\xi)]\cdot
{}^{z_{\a}(\zeta,\eta)}x_{\b+2\g}(\xi). $$
\par
Again by Lemma ${}^{z_{\a}(\zeta,\eta)}x_{\b+2\g}(\xi)=x_{\b+2\g}(\xi)z$
for some $z\in E(\C_2,IJ)$. 

Looking at the second factor, we see that 
$$ y={}^{z_{\a}(\zeta,\eta)}[x_{\g}(1),x_{\b}(\xi)]=
[{}^{z_{\a}(\zeta,\eta)}x_{\g}(1),{}^{z_{\a}(\zeta,\eta)}x_{\b}(\xi)]. $$
\noindent
Now, rewriting ${}^{z_{\a}(\zeta,\eta)}x_{\g}(1)$
in the form ${}^{z_{\a}(\zeta,\eta)}x_{\g}(1)=x_{\g}(1)w$, for some
$w\in E(\C_2,J)$, we get
$$ y= [x_{\g}(1)w,{}^{z_{\a}(\zeta,\eta)}x_{\b}(\xi)]=
{}^{x_{\g}(1)}[w,{}^{z_{\a}(\zeta,\eta)}x_{\b}(\xi)]
\cdot [x_{\g}(1),{}^{z_{\a}(\zeta,\eta)}x_{\b}(\xi)]. $$
\noindent
The first commutator here belongs to $[E(\C_2,I),E(\C_2,J)]$, 
and stays there after elementary conjugation. On the other hand, 
applying Lemma once more we see that ${}^{z_{\a}(\zeta,\eta)}x_{\b}(\xi)$
can be rewritten in the form $x_{\b}(\xi)v$, for some $v\in E(\C_2,IJ)$.
This means that 
$$  [x_{\g}(1),{}^{z_{\a}(\zeta,\eta)}x_{\b}(\xi)]= [x_{\g}(1),x_{\b}(\xi)v]=
[x_{\g}(1),x_{\b}(\xi)]\cdot{}^{x_{\b}(\xi)} [x_{\g}(1),v]. $$
\noindent
The second commutator here belongs to $E(\C_2,R,IJ)$ and stays
there after the elementary conjugation.

Combining the above, and expanding $[x_{\g}(1),x_{\b}(\xi)]$ by
the Chevalley commutator formula, we see that
$$ x=x_{\a}(\xi)\cdot 
{}^{x_{\g}(1)}[w,{}^{z_{\a}(\zeta,\eta)}x_{\b}(\xi)]\cdot
x_{\a}(-\xi)x_{\b+2\g}(-\xi)\cdot 
{}^{x_{\b}(\xi)}[x_{\g}(1),v]\cdot
x_{\b+2\g}(\xi)z. $$
\noindent
Here the first commutator belongs to $[E(\C_2,I),E(\C_2,J)]$, and
stays there after elementary conjugation, whereas the second 
commutator belongs to $E(\C_2,R,IJ)$, and stays there after 
elementary conjugation, while the outstanding factor $z$ already 
belongs to $E(\C_2,IJ)$, as claimed.
\par\smallskip
$\bullet$ This leaves us with the analysis of the case, when
$\a$ is a {\it short\/} root of $\Phi=\G_2$. Choose a long root $\b$
and a short root $\gamma$ such that $\a=\b+\g$, and the structure
constants are as above, $N_{\b\g11}=N_{\b\g12}=N_{\b\g13}=1$,
$N_{\b\g23}=2$. Then $x_{\a}(-\xi)$ can be expressed as 
$$ x_{\a}(-\xi)=[x_{\g}(1),x_{\b}(\xi)]\cdot
x_{\b+2\g}(\xi)x_{\b+3\g}(\xi)x_{2\b+3\g}(2\xi^2). $$
\noindent
Plugging this into the expression for $x$, we get
$$ x=
x_{\a}(\xi)\cdot
{}^{z_{\a}(\zeta,\eta)}[x_{\g}(1),x_{\b}(\xi)]\cdot
{\vphantom{\big(}}^{z_{\a}(\zeta,\eta)}\big(x_{\b+2\g}(\xi)
x_{\b+3\g}(\xi)x_{2\b+3\g}(2\xi^2)\big). $$
\par
By the same token, we see that the last factor belongs to 
the unipotent radical of the parabolic subgroup $P_{\a}$, and, thus,
by Lemma can be rewritten as
$$ {\vphantom{\big(}}^{z_{\a}(\zeta,\eta)}\big(x_{\b+2\g}(\xi)
x_{\b+3\g}(\xi)x_{2\b+3\g}(2\xi^2)\big)=
x_{\b+2\g}(\xi)x_{\b+3\g}(\xi)x_{2\b+3\g}(2\xi^2)\cdot z, $$
\noindent
for some $z\in E(\G_2,IJ)$.
\noindent
Now, repeating exactly the same calculation as in the previous
case, we see that the second factor in the above expression
for $x$ has the form
$$ {}^{x_{\g}(1)}[w,{}^{z_{\a}(\zeta,\eta)}x_{\b}(\xi)]\cdot
[x_{\g}(1),x_{\b}(\xi)]\cdot{}^{x_{\b}(\xi)} [x_{\g}(1),v], $$
\noindent
for some $w\in E(\G_2,J)$ and $v\in E(\G_2,IJ)$.

Combining the above, and once more expanding 
$[x_{\g}(1),x_{\b}(\xi)]$ by the Chevalley commutator formula, 
we see that
\begin{multline*}
x=x_{\a}(\xi)\cdot 
{}^{x_{\g}(1)}[w,{}^{z_{\a}(\zeta,\eta)}x_{\b}(\xi)]\cdot
x_{\a}(-\xi)x_{\b+2\g}(-\xi)x_{\b+3\g}(-\xi)x_{2\b+3\g}(-2\xi^2)\cdot \\
{}^{x_{\b}(\xi)}[x_{\g}(1),v]\cdot
x_{\b+2\g}(\xi)x_{\b+3\g}(\xi)x_{2\b+3\g}(2\xi^2)\cdot z
\end{multline*}
(recall that for the above choice of structure constants 
$[x_{a}(\xi),x_{\b+2\g}(\eta)]=x_{2\b+3\g}(3\xi\eta)$, whereas  
root elements corresponding to the roots $\b+2\g,\b+3\g,2\b+3\g$
commute). Here the first commutator belongs to $[E(\G_2,I),E(\G_2,J)]$, 
and stays there after elementary conjugation, the second 
commutator belongs to $E(\G_2,R,IJ)$, and stays there after 
elementary conjugation, while the outstanding factor $z$ already 
belongs to $E(\G_2,IJ)$, as claimed.
\par\smallskip
This completes the proof of Main Lemma, and thus also of all 
other new results stated in the Introduction.


\section{Generation of elementary subgroups by long root unipotents}
\label{sec:long}

In this section we prove another result pertaining to generation 
of relative elementary subgroups, closely related to the contents of
\cite{RNZ-generation}, and the present paper. Namely, we prove that
$E(\Phi,R,I)$ is generated by {\it long\/} root unipotents. There is 
no doubt that this result is known for several decades, and is 
immediately obvious to experts. However, we are not aware of any 
explicit source. 

The purpose to include this result here is two-fold. First, we need it
for future reference in the work by the first named author on
the width of root type unipotents in $\Sp(2l,R)$ and in $G(\F_4,R)$ 
with respect to the elementaries. Second, it would be very 
interesting to understand, what this result means for the generation
of relative commutator subgroups $[E(\Phi,R,I),E(\Phi,R,J)]$ and 
whether one could accordingly reduce their sets of generators 
obtained in \cite{RNZ-generation}, Theorem 1.3.

\begin{The}\label{The:5}
Let $\rk(\Phi)\ge 2$, for $\Phi=\G_2$ assume additionally
that $R$ does not have residue field $\GF{2}$ of $2$ elements. Then
for any ideal $I\unlhd R\label{sec:1}$ the relative elementary group $E(\Phi,R,I)$
is generated by long root elements.
\end{The}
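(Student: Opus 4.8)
The plan is to show that every generator $z_{\a}(\xi,\eta)$ of $E(\Phi,R,I)$ from Theorem~\ref{Oldie:5} lies in the subgroup $H$ generated by long root elements, i.e. by all $E(\Phi,R)$-conjugates of the $x_{\b}(\zeta)$ with $\b$ long and $\zeta\in I$. Since $H$ is by construction normal in $E(\Phi,R)$, it suffices to treat one root length at a time: if $\a$ is long there is nothing to prove, so the whole content is to express $z_{\a}(\xi,\eta)$ with $\a$ \emph{short} as a product of (conjugates of) long root elements of level $I$. As with the Main Lemma, this reduces at once to the rank~$2$ cases containing short roots, namely $\Phi=\B_2=\C_2$ and $\Phi=\G_2$; for all higher-rank $\Phi$ with two root lengths every short root embeds into a $\C_2$- or $\G_2$-subsystem as a short root, and $\A_2$, $\D_l$, $\E_l$ have only one length.

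First I would dispose of the easy half of each rank~$2$ case. In $\C_2$, write a short root as $\a=\b+\g$ with $\b,\g$ short and (after adjusting signs) $N_{\b\g11}=\pm1$; then $x_{\a}(\zeta)=[x_{\b}(\zeta),x_{\g}(1)]$ up to a long-root tail, but this expresses a short element through short ones, which is the wrong direction --- instead I use the configuration where a short root is $\a=\b+\g$ with $\b$ \emph{long}: then $[x_{\b}(\zeta),x_{\g}(1)]=x_{\a}(\zeta)\,x_{\text{long}}(\pm\zeta)$, so $x_{\a}(\zeta)$, for $\zeta\in I$, is a product of a long root element of level $I$ and a commutator $[x_{\b}(\zeta),x_{\g}(1)]$ with $x_{\b}(\zeta)\in X_{\b}(I)$ long; hence $X_{\a}(I)\le H$, and since the short root subgroups and $H$ generate and $H\unlhd E(\Phi,R)$, this already gives $E(\Phi,I)\le H$ once we handle all short $\a$ simultaneously. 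The same device works in $\G_2$: a short root $\a$ can be written $\a=\b+\g$ with $\b$ long, $\g$ short, and the Chevalley formula $[x_{\b}(\zeta),x_{\g}(1)]=x_{\a}(\zeta)x_{\b+2\g}(\zeta)x_{\b+3\g}(\zeta)x_{2\b+3\g}(2\zeta^2)$ exhibits $x_{\a}(\zeta)$ modulo long-and-short-root tails; peeling off the short tails $x_{\b+2\g}(\zeta),x_{\b+3\g}(\zeta)$ by the same trick and noting $2\b+3\g$ is long finishes it, so again $X_{\a}(I)\le H$ for all short $\a$.

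It remains to promote $E(\Phi,I)\le H$ to $E(\Phi,R,I)\le H$, equivalently to show $z_{\a}(\xi,\eta)\in H$ for short $\a$, $\xi\in I$, $\eta\in R$. Here I would use $z_{\a}(\xi,\eta)={}^{x_{-\a}(\eta)}x_{\a}(\xi)$ together with the previous step: $x_{\a}(\xi)\in H$ since $\xi\in I$, and $H$ is normal in $E(\Phi,R)$, so the conjugate lies in $H$ as well --- which actually shows directly that $E(\Phi,I)\le H$ already forces $E(\Phi,R,I)=E(\Phi,I)^{E(\Phi,R)}\le H$, and combined with the reverse inclusion $H\le E(\Phi,R,I)$ (obvious, since long root elements of level $I$ and their conjugates lie in $E(\Phi,R,I)\unlhd E(\Phi,R)$) gives equality. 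I expect the main obstacle to be purely bookkeeping: choosing, in each of $\C_2$ and $\G_2$, a root pair $(\b,\g)$ and a sign normalisation of the structure constants so that \emph{every} short root subgroup is reached and the tails are manifestly of level $I$; the $\G_2$ case is fussier because of the longer Chevalley expansion and the coefficient~$2$ (which is why the residue-field-$\GF{2}$ hypothesis is needed, exactly as in Theorem~\ref{Oldie:6}), but no genuinely new idea beyond the Main Lemma's toolkit is required.
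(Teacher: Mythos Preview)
Your strategy --- reduce to showing $x_\alpha(\xi)\in H$ for short $\alpha$ and $\xi\in I$, then use normality of $H$ --- is the same as the paper's, and your $\C_2$ argument is essentially identical to theirs: with $\alpha=\beta+\gamma$, $\beta$ long, $\gamma$ short, the only non-long tail $\beta+2\gamma$ is in fact long, so one commutator suffices.

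The $\G_2$ case, however, has a real gap. First a small slip: in your Chevalley expansion $\beta+3\gamma$ is long, not short, so the only short tail to worry about is $x_{\beta+2\gamma}(\zeta)$. The substantive problem is your claim that this can be ``peeled off by the same trick''. Writing $\beta+2\gamma$ as (long)$+$(short), say $(\beta+3\gamma)+(-\gamma)$, and applying the commutator formula again produces a new short tail at $\beta+\gamma$ --- i.e.\ at $\alpha$ itself --- so the recursion is circular: every such commutator yields a product of \emph{two} adjacent short root unipotents of the same level, never an isolated one. This is not an artefact of a bad choice of roots; over $R=\GF{2}$ the long root elements genuinely generate a proper subgroup of $E(\G_2,\GF{2})$, so no sign-free manipulation can work.

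What the paper does instead is introduce a free parameter: compare $[x_\beta(\xi),x_\gamma(\theta)]$, which contributes $x_{\beta+\gamma}(\pm\xi\theta)x_{\beta+2\gamma}(\pm\xi\theta^2)$ modulo long factors, with $[x_\beta(\xi\theta),x_\gamma(1)]$, which contributes $x_{\beta+\gamma}(\pm\xi\theta)x_{\beta+2\gamma}(\pm\xi\theta)$. The $\beta+\gamma$ pieces cancel and one extracts $x_{\beta+2\gamma}(\pm\xi(\theta^2-\theta))\in H$. The $\GF{2}$ hypothesis then enters exactly here, to ensure the elements $\theta^2-\theta$ generate the unit ideal --- not, as you suggest, through the harmless coefficient $2$ sitting at the long root $2\beta+3\gamma$.
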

\begin{proof}
 For $\Phi=\A_l,\D_l,E_l$ there is nothing to prove. Thus,
let $\Phi=\B_l,\C_l,F_4$ or $\G_2$. It suffices to prove that any
elementary short root element $x_{\b}(\xi)$, where $\b\in\Phi_s$
and $\xi\in I$, is a product of long root elements
$x_1,\ldots,x_m\in E(\Phi,R,I)$. If this is the case, then for any
$g\in E(\Phi,R)$ its conjugate
${}^g{x_{\b}(\xi)}={}^g{x_1}\cdot\ldots\cdot{}^g{x_m}$
is also a product of long root elements from $E(\Phi,R,I)$.
\par
First, let $\Phi\neq\G_2$. Then there exists a long root $\a$ and
a short root $\g$ such that $\b=\a+\g$. Then the root $\a+2\g=\b+\g$
is long and carrying the corresponding factor to the left hand side
in the commutator formula
$$ [x_{\a}(\xi),x_{\g}(1)]=x_{\b}(\pm\xi)x_{\b+\g}(\pm\xi), $$
\noindent
we express $x_{\b}(\pm\xi)$ as the product of three long root
unipotents
$$ x_{\b}(\pm\xi)=x_{\a}(\xi)\big(x_{\g}(1)x_{\a}(-\xi)x_{\g}(-1)\big)
x_{\b+\g}(\mp\xi), $$
\noindent
sitting in $E(\Phi,R,I)$.
\par
On the other hand, for the case $\Phi=\G_2$ there exists a long root
$\a$ and a short root $\g$ such that $\b=\a+2\g$. Then the root
$\a+\g=\b-\g$ is short, whereas the roots $\a+3\g=\b+\g$ and
$2\a+3\g=2\b-\g$ are both long. Plugging in the Chevalley commutator
formula
$$ [x_\a{\eta},x_{\g}(\zeta)]=
x_{\a+\g}(\pm\eta\zeta)x_{\b}(\pm\eta\zeta^2)
x_{\b+\g}(\pm\eta\zeta^3)x_{2\b-\g}(\pm\eta^2\zeta^3) $$
\noindent
first $\eta=\xi\in I$ and $\zeta=\theta\in R$ and then $\eta=\xi\theta$
and $\zeta=1$, for the same $\xi\in I$ and $\theta\in R$, and carrying
over the factors corresponding to the long roots $\b+\g$ and $2\b-\g$
to the left hand side of the resulting commutator formulas, we get
the following expressions. Firstly,
$$ y=x_{\a+\g}(\pm\xi\theta)x_{\b}(\pm\xi\theta^2)=
x_\a(\xi)\big(x_{\g}(\theta)x_\a(-\xi)x_{\g}(-\theta)\big)
x_{\b+\g}(\mp\xi\theta^3)x_{2\b-\g}(\mp\xi^2\theta^3) $$
\noindent
is a product of four long root unipotents sitting in $E(\Phi,R,I)$.
Similarly,
$$ z=x_{\a+\g}(\pm\xi\theta)x_{\b}(\pm\xi\theta)=
x_\a(\xi\theta)\big(x_{\g}(1)x_\a(-\xi\theta)x_{\g}(-1)\big)
x_{\b+\g}(\mp\xi\theta)x_{2\b-\g}(\mp\xi^2\theta^2) $$
\noindent
is a product of four such long root unipotents. Comparing these
equalities, we get an expression
$$ x_{\b}(\pm\xi(\theta^2-\theta))=yz^{-1} $$
\noindent
as a product of not more than six long root elements from
$E(\Phi,R,I)$. Since $R$ does not have residue field of two elements,
the ideal generated by $\theta^2-\theta$, where $\theta\in R$, is not
contained in any maximal ideal, and thus coincides with $R$. This
means that $x_{\b}(\xi)$ is a product of finitely many long root
elements from $E(\Phi,R,I)$.
\end{proof}


\section{Final remarks}\label{sec:final}

The main results of the present paper were completely unexpected
to us, and to several other experts in the structure theory of algebraic
groups over rings, with whom we discussed the subject of the present
paper. Once more, these results highlight the relative commutator 
subgroups $[E(\Phi,R,I),E(\Phi,R,J)]$ as an ubiquitous class of 
subgroups, that occur {\it surprisingly\/} often.

For the general linear group $\GL(n,R)$ these and other concomitant 
birelative groups were first considered in the seminal work of Hyman 
Bass \cite{Bass2}, and then systematically studied by Alec Mason and 
Wilson Stothers \cite{MAS3,Mason74,MAS1,MAS2}. At that stage, the 
standing premise was that $n\ge\sr(R)+1$.

In \cite{NVAS,NVAS2}
the first author and Alexei Stepanov observed that the standard
commutator formula holds for arbitrary commutative rings and in 
\cite{RZ11,RZ12} Roozbeh Hazrat and the second author proposed an approach
based on localisation. As part of that approach, in the linear case they 
found generators of relative commutator groups, which was a starting 
point of the present work.

Later, we together with Roozbeh Hazrat generalised the relative and birelative versions of
localisation, the commutator formulas themselves, and results
on generation of relative commutator groups to unitary 
groups \cite{RNZ1,RNZ3}, and to Chevalley groups 
\cite{RNZ2,RNZ-generation}. Luckily, at that time we were not aware of 
the pioneering work by Hong You \cite{you92}, see the footnote on
page 265 of \cite{RNZ2}.

These results were instrumental
in the work by Alexei Stepanov on the commutator width of Chevalley
groups, see \cite{SV10,Stepanov_nonabelian,Stepanov_universal}.
See also \cite{HPV,AS} for other interesting occurrences of 
the above commutator subgroups in the theory of Chevalley groups.
One can find many further related results, applications and open 
problems in our surveys and conference papers \cite{yoga,portocesareo,yoga2,RNZ4}.

So far, we have not even mentioned another extremely important
line of research, that initially was our main motivations to focus on 
relative commutator subgroups. Namely, the study of subgroups, 
normalised by a relative elementary subgroup, see \cite{RN,yoga,portocesareo,yoga2,RNZ4} for an outline and
further references. We plan to return to this problem in the context 
of Chevalley groups in our next publication.

We are very grateful to 
Roozbeh Hazrat, Andrei Lavrenov and Alexei Stepanov for 
extremely useful discussions. 



\end{document}

\bibitem{abe69} E.~Abe, Chevalley groups over local
rings, T\^ohoku Math.~J. 21 (1969), no.3, 474--494.

\bibitem{abe89} E.~Abe, Normal subgroups of Chevalley
groups over commutative rings, Algebraic $K$-Theory and Algebraic
Number Theory (Honolulu, HI, 1987), Contemp.~Math. 83, Amer. Math.
Soc., Providence, RI (1989), 1--17.